\DeclareMathOperator{\tr}{tr}
\newcommand{\ignore}[1]{}
\newtheorem{remark}{Remark}
\newcommand{\norm}[1]{\|#1\|}
\newcommand{\abs}[1]{|#1|}
\newcommand{\uu}{{\bf u}}
\newtheorem{theorem}{Theorem}[section]
\newtheorem{lemma}[theorem]{Lemma}
\theoremstyle{definition}
\newtheorem{definition}[theorem]{Definition}
\theoremstyle{remark}
\theoremstyle{thmstyleone}%
\theoremstyle{thmstyletwo}%
\theoremstyle{thmstylethree}%
\begin{document}

\title[Shifted CholeskyQR for sparse matrices]{Shifted CholeskyQR for sparse matrices}

%%=============================================================%%
%% GivenName	-> \fnm{Joergen W.}
%% Particle	-> \spfx{van der} -> surname prefix
%% FamilyName	-> \sur{Ploeg}
%% Suffix	-> \sfx{IV}
%% \author*[1,2]{\fnm{Joergen W.} \spfx{van der} \sur{Ploeg} 
%%  \sfx{IV}}\email{iauthor@gmail.com}
%%=============================================================%%

\author*[1]{\fnm{Haoran} \sur{Guan}}\email{21037226R@connect.polyu.hk}

\author[2]{\fnm{Yuwei} \sur{Fan}}\email{fanyuwei2@huawei.com}

\affil*[1]{\orgdiv{Department of Applied Mathematics}, \orgname{The Hong Kong Polytechnic University}, \orgaddress{\street{Hung Hom}, \city{Kowloon}, \postcode{999077}, \state{Hong Kong SAR}, \country{China}}}

\affil[2]{\orgdiv{Theory Lab}, \orgname{Huawei Hong Kong Research Center}, \orgaddress{\street{Sha Tin}, \city{New Territories}, \postcode{999077}, \state{Hong Kong SAR}, \country{China}}}

%%==================================%%
%% Sample for unstructured abstract %%
%%==================================%%

\abstract{In this work, we focus on Shifted CholeskyQR (SCholeskyQR) for sparse matrices. We provide a new shifted item $s$ for Shifted CholeskyQR3 (SCholeskyQR3) based on the number of non-zero elements (nnze) and the element with the largest absolute value of the input sparse $X \in \mathbb{R}^{m\times n}$ with $m \ge n$. We do rounding error analysis of SCholeskyQR3 with such an $s$ and show that SCholeskyQR3 is accurate in this case. Therefore, an alternative choice of $s$ can be taken for SCholeskyQR3 with the comparison between our new $s$ and the $s$ shown in the previous work when the input $X$ is sparse, improving the applicability and residual of the algorithm for the ill-conditioned cases. Numerical experiments demonstrate the advantage of SCholeskyQR3 with our alternative choice of $s$ in both applicablity and accuracy over the case with the original $s$, together with the same level of efficiency. This work is also the first to build connections between sparsity and numerical algorithms with detailed rounding error analysis to the best of our knowledge.}

\keywords{Sparse matrices, QR Factorization, Numerical Linear Algebra, Computational Mathematics}

%%\pacs[JEL Classification]{D8, H51}

%%\pacs[MSC Classification]{35A01, 65L10, 65L12, 65L20, 65L70}

\maketitle

\section{Introduction}
The problem of matrix factorization is encountered in both academia and industry across various fields, such as data analysis and engineering. Numerous well-known algorithms exist for matrix factorization, including QR factorization, Cholesky factorization, and LU factorization. QR factorization is one of the most important methods of matrix factorization and is particularly useful in many applications, such as randomized singular value decomposition, Krylov subspace methods, the local optimal block preconditioned conjugate gradient method (LOBPCG), and block Householder QR algorithms  \cite{halko2011, 2020, 2018robust, 1989}. In recent years, several different algorithms for QR factorization have been developed, each with distinct advantages, including Householder QR, CGS(2), MGS(2), TSQR, and CholeskyQR. For further details, see \cite{ballard2011, 2011, Communication, MatrixC, Higham, Numerical} and their references.

\subsection{Summary of CholeskyQR-type algorithms}
Among all the algorithms for QR factorization, CholeskyQR, which is shown in Algorithm~\ref{alg:cholqr}, has attracted much attention in recent years. It is a communication-avoiding algorithm specifically designed for $X \in \mathbb{R}^{m\times n}$ with $m \ge n$ and $\mbox{rank(X)}=n$. CholeskyQR outperforms TSQR in terms of speed \cite{2014}. For the input matrix $X \in \mathbb{R}^{m\times n}$, the first step of CholeskyQR involves computing a gram matrix $G \in \mathbb{R}^{n\times n}$. Subsequently, a step of Cholesky factorization is performed to obtain an upper-triangular matrix $R \in \mathbb{R}^{n\times n}$, from which $Q \in \mathbb{R}^{m\times n}$ can then be computed.

\begin{algorithm}
\caption{$[Q,R]=\mbox{CholeskyQR}(X)$}
\label{alg:cholqr}
\begin{algorithmic}[1]
\REQUIRE $X \in \mathbb{R}^{m\times n}.$ 
\ENSURE \mbox{Orthogonal factor} $Q \in \mathbb{R}^{m\times n}$, \mbox{Upper triangular factor} $R \in \mathbb{R}^{n \times n}.$ 
\STATE $G  = X^{\top}X,$
\STATE $R = \mbox{Cholesky}(G),$
\STATE $Q  =  XR^{-1}.$
\end{algorithmic}
\end{algorithm}% 

However, a single CholeskyQR does not have good orthogonality in orthogonality when $\kappa_{2}(X)$ is large enough. Therefore, Algorithm~\ref{alg:cholqr} is seldom used directly. Alternatively, an improved algorithm called CholeskyQR2 \cite{2014, error} has been developed by applying CholeskyQR twice, as shown in Algorithm~\ref{alg:cholqr2}. CholeskyQR2 has good accuracy in both orthogonality and residual. However, with rounding errors in the matrix multiplication and matrix factorization, CholeskyQR2 is not applicable to many ill-conditioned matrices, as numerical breakdown may occur during Cholesky factorization. To address this issue, researchers have proposed a novel enhanced algorithm known as Shifted CholeskyQR (SCholeskyQR) with a shifted item $s$ to deal with ill-conditioned cases \cite{Shifted}, as detailed in Algorithm~\ref{alg:Shifted}. CholeskyQR2 is then applied after SCholeskyQR and Shifted CholeskyQR3 (SCholeskyQR3) is formed, which keeps accuracy in both orthogonality and residual, as shown in Algorithm~\ref{alg:Shifted3}. For SCholeskyQR3, we provide an improved SCholeskyQR3 with a better shifted item $s$ in \cite{New}, providing better applicability with the same level of accuracy and efficiency compared to the $s$ in \cite{Shifted}.

\begin{algorithm}
\caption{$[Q,R]=\mbox{CholeskyQR2}(X)$}
\label{alg:cholqr2}
\begin{algorithmic}[1]
\REQUIRE $X \in \mathbb{R}^{m\times n}.$ 
\ENSURE \mbox{Orthogonal factor} $Q \in \mathbb{R}^{m\times n}$, \mbox{Upper triangular factor} $R \in \mathbb{R}^{n \times n}.$ 
\STATE $[W,Y]=\mbox{CholeskyQR}(X),$
\STATE $[Q,Z]=\mbox{CholeskyQR}(W),$
\STATE $R=ZY.$
\end{algorithmic}
\end{algorithm}%

\begin{algorithm}
\caption{$[Q,R]=\mbox{SCholeskyQR}(X)$}
\label{alg:Shifted}
\begin{algorithmic}[1]
\REQUIRE $X \in \mathbb{R}^{m\times n}.$ 
\ENSURE \mbox{Orthogonal factor} $Q \in \mathbb{R}^{m\times n}$, \mbox{Upper triangular factor} $R \in \mathbb{R}^{n \times n}.$ 
\STATE $G  = X^{\top}X,$
\STATE take $s>0,$
\STATE $R = \mbox{Cholesky}(G+sI),$
\STATE $Q  =  XR^{-1}.$
\end{algorithmic}
\end{algorithm}%

\begin{algorithm}
\caption{$[Q,R]=\mbox{SCholeskyQR3}(X)$}
\label{alg:Shifted3}
\begin{algorithmic}[1]
\REQUIRE $X \in \mathbb{R}^{m\times n}.$ 
\ENSURE \mbox{Orthogonal factor} $Q \in \mathbb{R}^{m\times n}$, \mbox{Upper triangular factor} $R \in \mathbb{R}^{n \times n}.$ 
\STATE $[W,Y]=\mbox{SCholeskyQR}(X),$
\STATE $[Q,Z]=\mbox{CholeskyQR2}(W),$
\STATE $R=ZY.$
\end{algorithmic}
\end{algorithm}%

\subsection{New considerations}
In \cite{Shifted}, the requirement of $\kappa_{2}(X)$ for SCholeskyQR3 is derived from some steps of error estimations. The upper bound of $\kappa_{2}(X)$ is expressed as the polynomial with $m$ and $n$ for $X \in \mathbb{R}^{m\times n}$. Moreover, the shifted item $s$ for SCholeskyQR3 is taken based on the upper bounds of several certain steps of error analysis. In \cite{New}, we introduce a new $\norm{\cdot}_{g}$ and provide better error analysis compared to that in \cite{Shifted}. Therefore, an improved shifted item $s$ based on $\norm{\cdot}_{g}$ is provided for SCholeskyQR3, improving the applicability while maintaining accuracy and efficiency. However, we still aim to provide better rounding error analysis for SCholeskyQR3, which can bring a more optimal $s$ to enhance the properties of the algorithm.

In many real-world applications, especially in industry and various scientific fields, the input matrix $X \in \mathbb{R}^{m\times n}$ is often sparse, especially when $m$ and $n$ are large. Sparse matrices exhibit different properties compared to dense matrices and frequently arise in numerical PDEs and their applications in physics, chemistry and astronomy. Regarding CholeskyQR-type algorithms, we are exploring whether the sparsity of $X$ can exhibit different properties and performances compared to the dense cases. Since the sparsity of $X$ may lead to overestimation of error bounds in many steps of the algorithms, we try to provide theoretical analysis of SCholeskyQR in sparse cases and take a alternative shifted item $s$ for the sparse $X$ based on its number of non-zero elements (nnze).

\subsection{Our contributions in this work}
To the best of our knowledge, this work is the first to discuss the connection between CholeskyQR-type algorithms and sparse matrices. It is hard for us to do theoretical analysis for sparse matrices quantitatively with the tools in \cite{Shifted, New}. Therefore, we combine the sparsity and the key element of the sparse matrices with rounding error analysis, which is highly innovative compared to the existing works. We propose a new $s$ based on the nnze and the key element with the largest absolute value of the input $X$ for SCholeskyQR3, which differs significantly from the approaches in \cite{New, Shifted}. Therefore, an alternative choice of $s$ based on the comparison between our new $s$ and the original $s$ in \cite{New} can be taken. We prove that such a new $s$ can improve the applicability and residual of SCholeskyQR3 compared to the results in \cite{New}. Numerical experiments confirm the advantage of applicability and accuracy of SCholeskyQR3 with our new $s$ and show that our alternative choice of $s$ can guarantee the efficiency of the algorithm. Distinguished from many existing works about sparse matrices, the primary purpose of this work is not to accelerate the algorithm but to improve the applicability of the algorithm based on the properties of the sparse matrices, showing a different perspective on the research of sparsity. Our work is an extension and improvement of \cite{New} in the widely-used sparse cases and is the only work focusing on rounding error analysis of the sparse cases as far as we know.

Here, we give some notations of this work. $\norm{\cdot}_{F}$ and $\norm{\cdot}_{2}$ denote the Frobenius norm and the $2$-norm of the matrix. For $X \in \mathbb{R}^{m\times n}$ and is full-rank, $\norm{X}_{2}=\sigma_{1}(X)$ is the largest singular value and $\sigma_{min}(X)$ is the smallest singular value of $X$. $\kappa_{2}(X)=\frac{\norm{X}_{2}}{\sigma_{min}(X)}$ is the condition number of $X$. $\uu=2^{-53}$ is the unit roundoff. For the input matrix $X$, $\abs{X}$ is the matrix whose elements are all the absolute values of the elements of $X$. $fl(\cdot)$ denotes the computed value in floating-point arithmetic. 

\subsubsection{The improved SCholeskyQR3}
Before introducing the theoretical results in this work, we make a review of the improved SCholeskyQR3 (ISCholeskyQR3) with a new $s$ in our previous work \cite{New} compared to that in \cite{Shifted}. This is an $s$ for the general cases. We provide a definition of a new $\norm{\cdot}_{g}$ in \cite{New} first.

\begin{definition}[The definition of ${\norm{\cdot}_{g}}$]
\label{def:defg}
If $X=[X_{1},X_{2}, \cdots X_{n-1},X_{n}] \in \mathbb{R}^{m\times n}$, then
\begin{equation}
\norm{X}_{g}:=\max_{1 \le j \le n}\norm{X_{j}}_{2}. \nonumber
\end{equation}
where
\begin{equation}
\norm{X_{j}}_{2}=\sqrt{x_{1,j}^{2}+x_{2,j}^{2}+ \cdots +x_{m-1,j}^{2}+x_{m,j}^{2}}. \nonumber
\end{equation}
\end{definition}

With Definition~\ref{def:defg}, we show the theoretical results of ISCholeskyQR3 with a shifted item based on $\norm{X}_{g}$ for the input $X \in \mathbb{R}^{m\times n}$.

\begin{lemma}[The relationship between $\kappa_{2}(X)$ and $\kappa_{2}(W)$ for ISCholeskyQR]
\label{lemma 2.9}
For $X \in \mathbb{R}^{m\times n}$ and $[W,Y]=\mbox{ISCholeskyQR}(X)$, with $mn\uu \le \frac{1}{64}$, $n(n+1)\uu \le \frac{1}{64}$, $11(mn\uu+n(n+1)\uu)\norm{X}_{g}^{2} \le s \le \frac{1}{100}\norm{X}_{g}^{2}$ and $\kappa_{2}(X) \le \frac{1}{4.89pn^{2}\uu}$, we have
\begin{equation}
\kappa_{2}(W) \le 3.24\sqrt{1+t(\kappa_{2}(X))^{2}}. \nonumber
\end{equation}
Here, we have $t=\frac{s}{\norm{X}_{2}^{2}} \le \frac{1}{100}$. When $[Q,R]=\mbox{ISCholeskyQR3}(X)$, if we take $s=11(mn\uu+n(n+1)\uu)\norm{X}_{g}^{2}$ and $\kappa_{2}(X)$ is large enough, a sufficient condition for $\kappa_{2}(X)$ is
\begin{equation}
\kappa_{2}(X) \le \frac{1}{86p(mn\uu+(n+1)n\uu)} \le \frac{1}{4.89pn^{2}\uu}. \label{eq:sck}
\end{equation}
\end{lemma}

\begin{lemma}[Rounding error analysis of ISCholeskyQR3]
\label{lemma 2.10}
For $X \in \mathbb{R}^{m\times n}$ and $[Q,R]=\mbox{ISCholeskyQR3}(X)$, if we take $s=11(mn\uu+n(n+1)\uu)\norm{X}_{g}^{2}$ and \eqref{eq:sck} is satisfied, with $mn\uu \frac{1}{64}$ and $n(n+1)\uu \le \frac{1}{64}$, we have
\begin{align}
\norm{Q^{\top}Q-I}_{F} &\le 6(mn\uu+n(n+1)\uu), \nonumber \\
\norm{QR-X}_{F} &\le (6.57p+4.81)n^{2}\uu\norm{X}_{2}. \nonumber
\end{align}
Here, $p=\frac{\norm{X}_{g}}{\norm{X}_{2}}$, $\frac{1}{\sqrt{n}} \le p \le 1$. 
\end{lemma}

\subsubsection{SCholeskyQR3 for sparse matrices}
Different from ISCholeskyQR3, we propose a new shifted item $s$ for SCholeskyQR3 when the input $X \in \mathbb{R}^{m\times n}$ is sparse. When $X \in \mathbb{R}^{m\times n}$ is a sparse matrix, we give some general settings below.
\begin{align}
mn\uu &\le \frac{1}{64}, \label{eq:45} \\
n(n+1)\uu &\le \frac{1}{64}, \label{eq:46} \\
11(m\uu+(n+1)\uu) \cdot kc^{2} &\le s \le \frac{1}{100n} \cdot kc^{2}, \label{eq:s1} \\ 
4.87n\sqrt{n}\uu \cdot l\kappa_{2}(X) &\le 1. \label{eq:47} 
\end{align}
Here, we define $c=\max\abs{x_{ij}}, 1 \le i \le m, 1 \le j \le n$. It is the element with the largest absolute value of the input $X \in \mathbb{R}^{m\times n}$. $k$ is the nnze of the sparse $X$ with $1 \le k \le mn$. $l=\frac{c\sqrt{k}}{\norm{X}_{2}}$.

With the settings and definitions above, we show the properties of SCholeskyQR3 with the alternative $s$ for the sparse $X \in \mathbb{R}^{m\times n}$, as shown in Theorem~\ref{THM 4.2}-Theorem~\ref{THM 4.1}. 

\begin{theorem}[The relationship between $\kappa_{2}(X)$ and $\kappa_{2}(W)$ for the sparse $X \in \mathbb{R}^{m\times n}$]
\label{THM 4.2}
If $X \in \mathbb{R}^{m\times n}$ is sparse and $[W,Y]=\mbox{SCholeskyQR}(X)$, when \eqref{eq:45} and \eqref{eq:47} are satisfied, we have
\begin{equation}
\kappa_{2}(W) \le 2h \cdot \sqrt{1+\alpha_{0}(\kappa_{2}(X))^{2}}, \label{eq:417}
\end{equation}
if $\alpha_{0}=\frac{s}{\norm{X}_{2}^{2}}=11(m\uu+(n+1)\uu) \cdot l^{2}$. For $[Q,R]=\mbox{SCholeskyQR3}(X)$ with $s=11(m\uu+(n+1)\uu) \cdot kc^{2}$, if $\kappa_{2}(X)$ is large enough, the sufficient condition of $\kappa_{2}(X)$ is 
\begin{equation}
\kappa_{2}(X) \le \frac{1}{85 \cdot l\sqrt{n} \cdot (m\uu+(n+1)\uu)}. \label{eq:418}
\end{equation}
\end{theorem}

\begin{theorem}[Rounding error analysis of SCholeskyQR3 for the sparse $X \in \mathbb{R}^{m\times n}$]
\label{THM 4.3}
With \eqref{eq:45}, \eqref{eq:46} and \eqref{eq:418}, if $X \in \mathbb{R}^{m\times n}$ is sparse and $[Q,R]=\mbox{SCholeskyQR3}(X)$, when $s=11(m\uu+(n+1)\uu) \cdot kc^{2}$, we have
\begin{align}
\norm{Q^{\top}Q-I}_{F} &\le 6(mn\uu+n(n+1)\uu), \label{eq:q} \\
\norm{QR-X}_{F} &\le (3.58l+5.45\sqrt{n}) \cdot n\sqrt{n}\uu\norm{X}_{2}. \label{eq:r}
\end{align}
\end{theorem}

We provide a new $s=11(m\uu+(n+1)\uu) \cdot kc^{2}$ in Theorem~\ref{THM 4.2} and Theorem~\ref{THM 4.3} with the corresponding rounding error analysis. Compared to the theoretical results of ISCholeskyQR3 for the general cases in \cite{New}, we focus on a common special case, sparse matrices, from a distinguished perspective. Therefore, when the input $X$ is sparse, we can make a comparison between different $s$ and take a choice of the shifted item $s$ for SCholeskyQR3 in the sparse cases. Such an optimal and alternative choice of $s$ can guarantee the accuracy and efficiency of the algorithms, as shown in Section~\ref{sec:Common} and Section~\ref{sec:experiments}.

\begin{theorem}[A choice of the shifted item $s$ for the optimal one]
\label{THM 4.1}
If $X \in \mathbb{R}^{m\times n}$ is a sparse matrix and $[Q,R]=\mbox{SCholeskyQR3}(X)$, then we can take a shifted item $s=j_{s}$, where
\begin{equation}
j_{s}=\min(11(m\uu+(n+1)\uu) \cdot kc^{2}, 11(mn\uu+n(n+1)\uu) \cdot \norm{X}_{g}^{2}). \label{eq:js1}
\end{equation}
Taking such a choice of $s$ can keep the accuracy of SCholeskyQR3 in the sparse cases.
\end{theorem}

Based on Theorem~\ref{THM 4.1}, we can have SCholeskyQR and SCholeskyQR3 for the sparse matrices in Algorithm~\ref{alg:Shiftedt1} and Algorithm~\ref{alg:Shiftedt2}. Comparisons between the theoretical results of SCholeksyQR and SCholeskyQR3 with different $s$ are listed in Table~\ref{tab:Comparisons1} and Table~\ref{tab:Comparisons2}.

\begin{algorithm}
\caption{$[Q,R]=\mbox{SCholeskyQR}(X)$ for the sparse $X \in \mathbb{R}^{m\times n}$}
\label{alg:Shiftedt1}
\begin{algorithmic}[1]
\REQUIRE $X \in \mathbb{R}^{m\times n}.$ 
\ENSURE \mbox{Orthogonal factor} $Q \in \mathbb{R}^{m\times n}$, \mbox{Upper triangular factor} $R \in \mathbb{R}^{n \times n}.$ 
\STATE get $k$ and $c$ for the input $X$, 
\STATE take $s=j_{s}$ as defined in \eqref{eq:js1}, 
\STATE $[Q,R]=\mbox{SCholeskyQR}(X).$
\end{algorithmic}
\end{algorithm}%

\begin{algorithm}
\caption{$[Q,R]=\mbox{SCholeskyQR3}(X)$ for the sparse $X \in \mathbb{R}^{m\times n}$}
\label{alg:Shiftedt2}
\begin{algorithmic}[1]
\REQUIRE $X \in \mathbb{R}^{m\times n}.$ 
\ENSURE \mbox{Orthogonal factor} $Q \in \mathbb{R}^{m\times n}$, \mbox{Upper triangular factor} $R \in \mathbb{R}^{n \times n}.$ 
\STATE get $k$ and $c$ for the input $X$, 
\STATE take $s=j_{s}$ as defined in \eqref{eq:js1}, 
\STATE $[Q,R]=\mbox{SCholeskyQR3}(X).$
\end{algorithmic}
\end{algorithm}%

\begin{table}
\caption{Comparison of $\kappa_{2}(X)$ between different $s$ when $X \in \mathbb{R}^{m\times n}$ is sparse}
\centering
\begin{tabular}{||c c c||}
\hline
$s$ & $\mbox{Sufficient condition of $\kappa_{2}(X)$}$ & $\mbox{Upper bound of $\kappa_{2}(X)$}$ \\
\hline
$11(mn\uu+n(n+1)\uu)\norm{X}_{g}^{2}$ & $\frac{1}{86p(mn\uu+n(n+1)\uu)}$ & $\frac{1}{4.89pn^{2}\uu}$ \\
\hline
$11(m\uu+(n+1)\uu) \cdot kc^{2}$ & $\frac{1}{85 \cdot l\sqrt{n} \cdot (m\uu+(n+1)\uu)}$ & $\frac{1}{4.87n\sqrt{n}\uu \cdot l}$ \\
\hline
\end{tabular}
\label{tab:Comparisons1}
\end{table}

\begin{table}
\caption{Comparison of the upper bounds of $\norm{QR-X}_{F}$ between different $s$ when $X \in \mathbb{R}^{m\times n}$ is sparse}
\centering
\begin{tabular}{||c c c||}
\hline
$s$ & $\mbox{SCholeskyQR}$ & $\mbox{SCholeskyQR3}$\\
\hline
$11(mn\uu+n(n+1)\uu)\norm{X}_{g}^{2}$ & $1.6n^{2}\uu\norm{X}_{g}$ & $(6.57p+4.87)n^{2}\uu\norm{X}_{2}$ \\
\hline
$11(m\uu+(n+1)\uu) \cdot kc^{2}$ & $1.66n\sqrt{n}\uu \cdot l\norm{X}_{2}$ & $(3.58l+5.45\sqrt{n}) \cdot n\sqrt{n}\uu\norm{X}_{2}$ \\
\hline
\end{tabular}
\label{tab:Comparisons2}
\end{table}
\begin{remark}
Theorem~\ref{THM 4.3} is one of the most important theoretical results in this work, demonstrating that when $X$ is sparse, a new $s=11(m\uu+(n+1)\uu) \cdot (vt_{1}+nt_{2})c^{2}$ can be taken for SCholeskyQR3. Theorem~\ref{THM 4.1} shows that our alternative choice of $s$ can guarantee the accuracy of the algorithm. These two theorems indicate that we can leverage the structure of the sparse $X$ to construct a new shifted item $s$, which is superior to $s=11(mn\uu+n(n+1)\uu) \cdot \norm{X}_{g}^{2}$ under some conditions. In the real implementations, it is not expensive for us to receive $k$ and $c$ using MATLAB. When our new $s$ is the optimal one, we can also prove that the residual in \eqref{eq:r} is in the same level as that in Lemma~\ref{lemma 2.10}, which will be shown in Section~\ref{sec:Common}, exhibiting the effectiveness of our choice of the shifted item $s$ for the sparse cases.
\end{remark}

\subsection{Outline of this work}
The paper is organized as follows. We show some theoretical results shown in the existing works in Section~\ref{sec:LR}, which will be used in this work. In Section~\ref{sec:Common}, we provide a theoretical analysis of SCholeskyQR3 for the sparse matrices and prove Theorem~\ref{THM 4.2}-Theorem~\ref{THM 4.1}, which constitutes the key part of this work. Following the theoretical analysis, we perform numerical experiments using typical examples from some real-world problems and present the results in Section~\ref{sec:experiments}. Finally, we summarize the results in Section~\ref{sec:conclusions}.

\section{Some lemmas regarding rounding error analysis}
\label{sec:LR}
In this section, we introduce several lemmas regarding deterministic rounding error analysis in \cite{Higham, Perturbation}. They will be used in the analysis of this work.

\begin{lemma}[Weyl's Theorem for singular values]
\label{lemma 2.1}
If $A,B \in \mathbb{R}^{m\times n}$, then
\begin{equation}
\sigma_{min}(A+B) \ge \sigma_{min}(A)-\norm{B}_{2}. \nonumber
\end{equation}
\end{lemma}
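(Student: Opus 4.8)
The plan is to establish the bound through the variational (minimum) characterisation of the smallest singular value. In the tall-skinny setting of interest here we have $m \ge n$, so for any $M \in \mathbb{R}^{m\times n}$ the identity $\sigma_{min}(M) = \min_{\norm{x}_{2}=1} \norm{Mx}_{2}$ holds, with the minimum taken over unit vectors $x \in \mathbb{R}^{n}$. I would take this as the point of departure, since it converts a statement about singular values into one about vector norms, where the triangle inequality is directly available.

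First I would choose a unit vector $x^{\ast} \in \mathbb{R}^{n}$ attaining the minimum for $A+B$, so that $\sigma_{min}(A+B) = \norm{(A+B)x^{\ast}}_{2}$. The reverse triangle inequality then gives $\norm{(A+B)x^{\ast}}_{2} \ge \norm{Ax^{\ast}}_{2} - \norm{Bx^{\ast}}_{2}$. I would next bound the two terms separately: because $x^{\ast}$ is a unit vector, the minimum characterisation yields $\norm{Ax^{\ast}}_{2} \ge \sigma_{min}(A)$, while the definition of the operator $2$-norm gives $\norm{Bx^{\ast}}_{2} \le \norm{B}_{2}$. Chaining these three inequalities produces $\sigma_{min}(A+B) \ge \sigma_{min}(A) - \norm{B}_{2}$, which is precisely the claim.

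The computation is short, and the only genuine subtlety is justifying the minimum characterisation of $\sigma_{min}$. This identity is valid exactly when $m \ge n$; for $m < n$ the right-hand minimum collapses to $0$ since the kernel is nontrivial, so the same substitution is unavailable. To cover the fully general statement $A, B \in \mathbb{R}^{m\times n}$, I would instead pass to the symmetric Jordan--Wielandt dilation
\begin{equation}
\widetilde{M} = \begin{pmatrix} 0 & M \\ M^{\top} & 0 \end{pmatrix}, \nonumber
\end{equation}
whose top $\min(m,n)$ eigenvalues coincide with the singular values $\sigma_{i}(M)$ and which satisfies $\norm{\widetilde{M}}_{2} = \norm{M}_{2}$. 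Applying the classical Weyl perturbation bound for eigenvalues of symmetric matrices to $\widetilde{A}$ and $\widetilde{B}$ gives $\abs{\sigma_{i}(A+B) - \sigma_{i}(A)} \le \norm{B}_{2}$ for every $i \le \min(m,n)$, and taking $i = \min(m,n)$ recovers the stated inequality. I expect no real obstacle beyond selecting the characterisation appropriate to the required generality.
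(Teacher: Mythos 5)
Your argument is correct. Note that the paper itself offers no proof of this lemma: it is quoted as a classical result with citations to Higham and the perturbation-theory literature, so there is no in-paper argument to compare against. Your two-step treatment is the standard one and is sound: the variational characterisation $\sigma_{min}(M)=\min_{\norm{x}_2=1}\norm{Mx}_2$ combined with the reverse triangle inequality settles the tall case $m\ge n$ (which is the only case the paper actually uses, since $X\in\mathbb{R}^{m\times n}$ is tall-skinny throughout), and the Jordan--Wielandt dilation reduces the general rectangular case to Weyl's eigenvalue perturbation bound for symmetric matrices. The only point worth being careful about is the one you already flag: for $m<n$ the quantity $\min_{\norm{x}_2=1}\norm{Mx}_2$ is $0$ rather than the smallest singular value, so the dilation argument (or a transposition argument) is genuinely needed for full generality. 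No gaps.
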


\begin{lemma}[Rounding error in matrix multiplications]
\label{lemma 2.2}
For $A \in \mathbb{R}^{m\times n}, B \in \mathbb{R}^{n\times p}$, the error in computing the matrix product $AB$ in floating-point arithmetic is bounded by
\begin{equation}
\abs{AB-fl(AB)}\le \gamma_{n}\abs{A}\abs{B}. \nonumber
\end{equation}
Here, $\abs{A}$ is the matrix whose $(i,j)$ element is $\abs{a_{ij}}$ and
\begin{equation}
\gamma_n: = \frac{n{\uu}}{1-n{\uu}} \le 1.02n{\uu}. \nonumber
\end{equation}
\end{lemma}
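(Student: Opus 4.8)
The plan is to prove the componentwise bound $\abs{AB-fl(AB)}\le \gamma_{n}\abs{A}\abs{B}$ by analyzing the floating-point computation of each entry of the product as an inner product of length $n$. First I would fix an arbitrary entry $(i,j)$ of the product, so that the exact value is $(AB)_{ij}=\sum_{k=1}^{n}a_{ik}b_{kj}$, and the computed value $fl((AB)_{ij})$ is obtained by accumulating the $n$ products $a_{ik}b_{kj}$ in floating-point arithmetic. The core of the argument is the standard model of floating-point arithmetic: every elementary operation $\mathrm{op}\in\{+,-,\times\}$ satisfies $fl(x\,\mathrm{op}\,y)=(x\,\mathrm{op}\,y)(1+\delta)$ with $\abs{\delta}\le\uu$, where $\uu$ is the unit roundoff.

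Next I would track the accumulation of these rounding factors. Forming each product $a_{ik}b_{kj}$ introduces one relative error factor $(1+\delta_k^{\times})$, and summing the $n$ terms sequentially introduces at most $n-1$ further factors $(1+\delta^{+})$. Thus the computed entry has the form $fl((AB)_{ij})=\sum_{k=1}^{n}a_{ik}b_{kj}\prod_{\ell}(1+\delta_{k,\ell})$, where each product of $(1+\delta)$ terms ranges over at most $n$ factors (one multiplication plus up to $n-1$ additions). The key step is to bound the aggregate perturbation: I would invoke the standard lemma that a product of at most $n$ factors $(1+\delta_\ell)$ with $\abs{\delta_\ell}\le\uu$ can be written as $1+\theta$ with $\abs{\theta}\le\gamma_n=\frac{n\uu}{1-n\uu}$, valid under the mild assumption $n\uu<1$. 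Applying this to each term gives $fl((AB)_{ij})=\sum_{k=1}^{n}a_{ik}b_{kj}(1+\theta_k)$ with $\abs{\theta_k}\le\gamma_n$.

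Subtracting the exact value and taking absolute values then yields
\begin{equation}
\abs{(AB)_{ij}-fl((AB)_{ij})}=\Bigl|\sum_{k=1}^{n}a_{ik}b_{kj}\theta_k\Bigr|\le\sum_{k=1}^{n}\abs{a_{ik}}\abs{b_{kj}}\abs{\theta_k}\le\gamma_n\sum_{k=1}^{n}\abs{a_{ik}}\abs{b_{kj}}, \nonumber
\end{equation}
and the rightmost sum is exactly the $(i,j)$ entry of $\abs{A}\abs{B}$. Since $(i,j)$ was arbitrary, this establishes the claimed componentwise inequality. The final bound $\gamma_n\le 1.02n\uu$ follows immediately from $\gamma_n=\frac{n\uu}{1-n\uu}$ whenever $n\uu\le\frac{1}{51}$ (or any comparable smallness assumption), a routine arithmetic check.

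The main obstacle, such as it is, lies in the bookkeeping of the $\theta_k$ factors: one must verify that no term accumulates more than $n$ rounding factors regardless of the summation order, and that the $1+\theta$ lemma is applied with the correct count. This is purely a matter of counting operations in the inner product and does not depend on the specific summation strategy, since each partial-sum addition and the initial multiplication together contribute at most $n$ factors to any single summand. Once the counting is pinned down, the perturbation lemma and the triangle inequality deliver the result directly.
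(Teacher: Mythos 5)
Your proposal is correct: it is the standard Higham-style argument (inner-product accumulation under the $fl(x\,\mathrm{op}\,y)=(x\,\mathrm{op}\,y)(1+\delta)$ model, the $1+\theta_n$ perturbation lemma with $\abs{\theta_n}\le\gamma_n$, then the triangle inequality entrywise), and the constant check $\gamma_n\le 1.02n\uu$ under $n\uu\le\tfrac{1}{51}$ is consistent with the paper's standing assumption $mn\uu\le\tfrac{1}{64}$. The paper itself offers no proof of this lemma — it is quoted directly from \cite{Higham} — so your reconstruction coincides with the cited source's proof rather than diverging from anything in the paper.
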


\begin{lemma}[Rounding error in Cholesky factorization]
\label{lemma 2.3}
If Cholesky factorization applied to the symmetric positive definite $A \in \mathbb{R}^{n\times n}$ runs to completion, then the computed factor $R \in \mathbb{R}^{n\times n}$ satisfies
\begin{equation}
R^{\top}R=A+\Delta{A}, \quad \abs{\Delta A}\le \gamma_{n+1}\abs{{R}^{\top}}\abs{R}. \nonumber
\end{equation}
\end{lemma}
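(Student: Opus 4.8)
The plan is to derive the bound entry by entry from the scalar recurrences that define Cholesky factorization, and then assemble the per-entry estimates into the claimed matrix inequality. Recall that with $R$ upper triangular and $A=R^{\top}R$, the $(i,j)$ entry for $i\le j$ satisfies $a_{ij}=\sum_{k=1}^{\min(i,j)}r_{ki}r_{kj}$, so the algorithm produces the strictly-upper entries through $r_{ij}=\big(a_{ij}-\sum_{k=1}^{i-1}r_{ki}r_{kj}\big)/r_{ii}$ and the diagonal entries through $r_{jj}=\big(a_{jj}-\sum_{k=1}^{j-1}r_{kj}^{2}\big)^{1/2}$. Because both $A$ and $R^{\top}R$ are symmetric, it suffices to analyze the case $i\le j$ and extend to the lower triangle by symmetry; the comparison matrix $\abs{R^{\top}}\abs{R}$ is symmetric as well, so an entrywise bound on the upper triangle yields the full matrix bound.

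First I would fix the standard model of floating-point arithmetic, $\fl{a\circ b}=(a\circ b)(1+\delta)$ with $\abs{\delta}\le\uu$ for each of the four elementary operations and for the square root, together with the accumulation principle that a product of $k$ factors $(1+\delta_{i})^{\pm1}$ equals $1+\theta_{k}$ with $\abs{\theta_{k}}\le\gamma_{k}$, where $\gamma_{k}$ is as defined in \cref{lemma 2.2}. Applying this to the computed off-diagonal recurrence, the evaluation of the inner product $\sum_{k=1}^{i-1}r_{ki}r_{kj}$, its subtraction from $a_{ij}$, and the final division by $r_{ii}$ each contribute rounding factors that collapse, after rearrangement, to $a_{ij}=\sum_{k=1}^{i-1}r_{ki}r_{kj}+r_{ii}r_{ij}+\Delta a_{ij}=(R^{\top}R)_{ij}+\Delta a_{ij}$, with the collected perturbation obeying $\abs{\Delta a_{ij}}\le\gamma_{i}\sum_{k=1}^{i}\abs{r_{ki}}\abs{r_{kj}}=\gamma_{i}\,(\abs{R^{\top}}\abs{R})_{ij}$.

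Next I would treat the diagonal entries. Squaring the computed recurrence for $r_{jj}$ reproduces the same inner-product-plus-subtraction pattern but introduces one extra $(1+\delta)$ factor from the square root; absorbing it gives $a_{jj}=\sum_{k=1}^{j}r_{kj}^{2}+\Delta a_{jj}=(R^{\top}R)_{jj}+\Delta a_{jj}$ with $\abs{\Delta a_{jj}}\le\gamma_{j+1}\,(\abs{R^{\top}}\abs{R})_{jj}$. In both the off-diagonal and the diagonal case the per-entry constant is at most $\gamma_{\min(i,j)+1}\le\gamma_{n+1}$ since $\min(i,j)\le n$. Collecting these estimates into the error matrix $\Delta A=R^{\top}R-A$ and using the symmetry noted above yields $\abs{\Delta A}\le\gamma_{n+1}\abs{R^{\top}}\abs{R}$, which is exactly the assertion.

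The main obstacle is the careful bookkeeping of how many rounding errors enter each recurrence, in particular ensuring that the division in the off-diagonal formula and the square root in the diagonal formula are accounted for so that the accumulated relative perturbation collapses to the single clean factor $\gamma_{n+1}$ rather than something larger. The delicate point is confirming that the number of rounding factors per summand never exceeds $\min(i,j)+1$ irrespective of the order in which the inner products are accumulated, since it is precisely this count that makes the uniform constant $\gamma_{n+1}$ valid and sharp.
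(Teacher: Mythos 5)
This lemma is stated in the paper as a quoted result from \cite{Higham} (it is Theorem~10.3 of that reference) with no proof supplied, so there is no internal argument to compare against. Your proof is correct and is essentially the standard one from that reference: an entrywise analysis of the Cholesky recurrences yielding the factor $\gamma_{i}$ for off-diagonal entries (inner product, subtraction, division) and $\gamma_{j+1}$ for diagonal entries (the extra rounding factor from the square root), assembled by symmetry into $\abs{\Delta A}\le\gamma_{n+1}\abs{R^{\top}}\abs{R}$.
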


\begin{lemma}[Rounding error in solving triangular systems]
\label{lemma 2.4}
Let the triangular system $Tx=b$, where $T \in \mathbb{R}^{n\times n}$ is non-singular, be solved by institution with any ordering. Then the computed solution $x$ satisfies
\begin{equation}
(T+\Delta T)x=b, \quad \abs{\Delta T}\le \gamma_{n}\abs{R}. \nonumber
\end{equation}
\end{lemma}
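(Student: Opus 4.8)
The plan is to establish the backward-error bound for solving a triangular system by substitution, following the standard componentwise analysis of floating-point arithmetic. I would begin by fixing notation: let $T \in \mathbb{R}^{n\times n}$ be nonsingular and upper (or lower) triangular, and let $x$ denote the computed solution of $Tx=b$ obtained by substitution in some fixed ordering. The $i$-th solution component is recovered from the defining relation $t_{ii}x_i = b_i - \sum_{j\neq i} t_{ij}x_j$, where the sum ranges over the already-computed entries dictated by the ordering. The core observation is that each arithmetic operation in forming this expression (the products $t_{ij}x_j$, their accumulated sum, the final subtraction, and the division by $t_{ii}$) incurs a relative perturbation bounded by $\uu$ in magnitude, so the computed $x_i$ exactly satisfies a perturbed version of the same equation.

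First I would carry out the standard inner-product error analysis. Computing $s_i = b_i - \sum_{j} t_{ij}x_j$ in floating point and then dividing by $t_{ii}$ introduces at most $i$ rounding steps along the longest dependency chain (the $i-1$ multiply-add operations plus the division). Collecting the accumulated relative error factors $\prod (1+\delta_k)^{\pm 1}$ with $|\delta_k|\le\uu$, each such product is bounded by $1+\theta$ with $|\theta|\le\gamma_n$, using the bookkeeping bound $\prod_{k=1}^{n}(1+\delta_k)^{\pm1} = 1+\theta_n$, $|\theta_n|\le\gamma_n = \frac{n\uu}{1-n\uu}$, exactly the quantity introduced in \cref{lemma 2.2}. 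Absorbing these factors into the coefficients of the $i$-th equation, I would show that the computed $x$ satisfies
\begin{equation}
\sum_{j} (t_{ij}+\Delta t_{ij})\,x_j = b_i, \qquad |\Delta t_{ij}| \le \gamma_n |t_{ij}|, \nonumber
\end{equation}
for every row $i$. Assembling these rowwise perturbations into the matrix $\Delta T$ gives $(T+\Delta T)x=b$ with the componentwise bound $|\Delta T| \le \gamma_n |T|$, which is the asserted conclusion. (I note the statement as printed writes $\abs{\Delta T}\le \gamma_{n}\abs{R}$; since $T$ is the triangular coefficient matrix and no separate factor $R$ is in play here, I would read $R$ as a typo for $T$ and prove the bound with $\abs{T}$ on the right.)

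The main obstacle is bookkeeping rather than conceptual: one must verify that the number of rounding operations feeding any single computed component does not exceed $n$, so that the accumulated factor is controlled by $\gamma_n$ and not some larger $\gamma_{n+1}$ or $\gamma_{2n}$. This requires care in how the partial sums are accumulated and in counting the division step, and it is here that the choice ``any ordering'' in the hypothesis matters — the bound must hold uniformly over orderings because reordering only permutes which products enter the chain, never lengthening it beyond $i-1$ multiply-adds plus one division for row $i$. Since $i \le n$, the per-row factor is at most $\gamma_n$, and because each $\Delta t_{ij}$ depends only on the errors in row $i$, the resulting perturbation is genuinely componentwise with no cross-row coupling. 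The remainder is the routine manipulation of the $(1+\delta_k)$ factors, which I would invoke from the standard error-analysis toolkit (e.g. \cite{Higham}) rather than expand in full.
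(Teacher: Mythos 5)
Your proposal is correct: it is the standard componentwise backward-error analysis for triangular substitution, which is precisely the argument in \cite{Higham} that the paper cites for this lemma --- the paper itself states the result without proof, so there is no internal proof to diverge from. You are also right to flag the typos in the statement: $\abs{\Delta T}\le \gamma_{n}\abs{R}$ should read $\abs{\Delta T}\le \gamma_{n}\abs{T}$, and ``institution'' should be ``substitution.''
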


\section{Proof of Theorem~\ref{THM 4.2}-Theorem~\ref{THM 4.1}}
\label{sec:Common}
In this section, we prove Theorem~\ref{THM 4.2}-Theorem~\ref{THM 4.1} when $X \in \mathbb{R}^{m\times n}$ and $\mbox{rank}(X)=n$. Among all the theorems, Theorem~\ref{THM 4.2} and Theorem~\ref{THM 4.3} are the key results.

\subsection{Lemmas to prove Theorem~\ref{THM 4.2}-Theorem~\ref{THM 4.1}}
Before proving Theorem~\ref{THM 4.2}-Theorem~\ref{THM 4.1}, we write $[W,Y]=\mbox{SCholeskyQR}(X)$ with error matrices below.
\begin{align}
G=X^{\top}X &+ E_{A}, \label{eq:E1} \\
Y^{\top}Y=G &+ sI+E_{B}, \label{eq:E2} \\
w_{i}^{\top}=x_{i}^{\top}(Y &+ \Delta Y_{i})^{-1}, \label{eq:E3} \\
WY=X &+ \Delta X. \label{eq:E4}
\end{align}
Here, $x_{i}^{\top}$ and $w_{i}^{\top}$ represent the $i$-th rows of $X$ and $W$, respectively. $E_{A}$ in \eqref{eq:E1} denotes the error matrix generated when calculating the Gram matrix $X^{\top}X$. Similarly, $E_{B}$ in \eqref{eq:E2} represents the error matrix resulting from performing Cholesky factorization on $B$ with a shifted item. Since $Y$ may be non-invertible, the $w_{i}^{\top}$ can be solved by solving the linear system $(Y^{\top}+(\Delta Y_{i})^{\top})(w_{i}^{\top})^{\top}=(x_{i}^{\top})^{\top}$, that is, the transpose of \eqref{eq:E3}. We do not write this step in the form of the whole matrices because each $\Delta Y_{i}$ depends on $Y$ and $x_{i}^{\top}$, where $\Delta Y_{i}$ denotes the rounding error for the $Y$-factor when calculating $w_{i}^{\top}$. In spite of this, $\Delta Y_{i}$ has an uniform upper bound according to Lemma~\ref{lemma 2.4}. When we express the last step of Algorithm~\ref{alg:Shifted} without $Y^{-1}$, the general error matrix of QR factorization is given by $\Delta X$ in \eqref{eq:E4}. 

The general idea of the theoretical analysis to prove Theorem~\ref{THM 4.1} is similar to that in \cite{New, Shifted, error}. However, we make some improvements in the proof, building connections between the sparsity and CholeskyQR. Some lemmas are shown below in order to prove Theorem~\ref{THM 4.2}-Theorem~\ref{THM 4.1}.

\begin{lemma}[Estimating $\norm{E_{A}}_{2}$ and $\norm{E_{B}}_{2}$]
\label{lemma 2.20}
For $\norm{E_{A}}_{2}$ and $\norm{E_{B}}_{2}$ in \eqref{eq:E1} and \eqref{eq:E2}, when \eqref{eq:s1} is satisfied, we have
\begin{align}
\norm{E_{A}}_{2} &\le 1.1m\uu \cdot kc^{2}, \label{eq:413} \\
\norm{E_{B}}_{2} &\le 1.1(n+1)\uu \cdot kc^{2}. \label{eq:414}
\end{align}
\end{lemma}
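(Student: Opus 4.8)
The plan is to bound the two error matrices $E_A$ and $E_B$ separately, in each case exploiting the sparsity structure from \cref{def:41} to replace the usual dense factor of $mn$ (or similar) by the much smaller count $vt_1+nt_2$ of nonzero entries, with every entry bounded in absolute value by $c$.

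First I would estimate $\norm{E_A}_2$. By \eqref{eq:E1}, $E_A = fl(X^\top X) - X^\top X$, so \cref{lemma 2.2} applied with $A=X^\top$ and $B=X$ gives the entrywise bound $\abs{E_A} \le \gamma_m \abs{X^\top}\abs{X}$, where $\gamma_m \le 1.02 m\uu$. The key observation is that the $(i,j)$ entry of $\abs{X^\top}\abs{X}$ is $\sum_k \abs{x_{ki}}\abs{x_{kj}}$, a sum over rows; summing the magnitudes of $E_A$ and using that each column of $X$ has at most $t_1$ (dense columns, $v$ of them) or $t_2$ (sparse columns) nonzeros, each of absolute value at most $c$, I would bound a suitable norm of $\abs{X^\top}\abs{X}$ by roughly $(vt_1+nt_2)c^2$. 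Concretely one controls $\norm{E_A}_2 \le \norm{E_A}_F$ or uses $\norm{E_A}_2 \le \gamma_m \norm{\,\abs{X^\top}\abs{X}\,}_2$ and then estimates the latter by the total nonzero mass $(vt_1+nt_2)c^2$; absorbing the factor $1.02/(1-m\uu)$ into the constant $1.1$ under \eqref{eq:45} yields \eqref{eq:413}. The analogue for $E_B$ comes from \cref{lemma 2.3}: since $Y^\top Y = (G+sI) + E_B$ with $\abs{E_B} \le \gamma_{n+1}\abs{Y^\top}\abs{Y}$, I would bound $\norm{\,\abs{Y^\top}\abs{Y}\,}_2$ in terms of $\norm{Y}_2^2$, and then control $\norm{Y}_2^2$ by $\norm{G+sI}_2 \le \norm{X}_2^2 + \norm{E_A}_2 + s$, which under the shift bound \eqref{eq:s1} and the sparsity estimate on $\norm{X}_2^2$ is again of order $(vt_1+nt_2)c^2$; this produces \eqref{eq:414} with the factor $(n+1)\uu$ coming from $\gamma_{n+1}$.

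The step I expect to be the genuine obstacle is the $E_B$ bound, because it requires a prior handle on $\norm{Y}_2$, and $Y$ is itself the computed Cholesky factor of the shifted, perturbed Gram matrix rather than of $G+sI$ exactly. The clean route is to first show $\norm{Y}_2^2 \le \norm{G+sI+E_B}_2$ is self-referential, so instead I would bound $\norm{Y^\top Y}_2 = \norm{G+sI+E_B}_2 \le \norm{X}_2^2 + \norm{E_A}_2 + s + \norm{E_B}_2$ and fold the resulting $\norm{E_B}_2$ term back using $\gamma_{n+1}\norm{\,\abs{Y^\top}\abs{Y}\,}_2 \approx \gamma_{n+1}\norm{Y}_2^2$, then solve the inequality for $\norm{E_B}_2$; under \eqref{eq:46} the coefficient $(n+1)\uu$ is small enough that this rearrangement loses only a harmless constant, letting one replace the dominant $\norm{X}_2^2+s$ by its sparsity surrogate $(vt_1+nt_2)c^2$ (recalling that $\norm{X}_2^2 \le (vt_1+nt_2)c^2$ since the squared Frobenius norm, itself an upper bound for $\norm{X}_2^2$, is a sum of at most $vt_1+nt_2$ squared entries each at most $c^2$). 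Throughout, the conditions \eqref{eq:45} and \eqref{eq:46} are what keep the geometric-series denominators $1-m\uu$ and $1-(n+1)\uu$ bounded below, so that the constants $1.1$ in \eqref{eq:413} and \eqref{eq:414} are legitimate; I would verify these numeric absorptions explicitly at the end.
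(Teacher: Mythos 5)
Your treatment of $E_{A}$ is essentially the paper's: the entrywise bound $\abs{E_{A}}\le\gamma_{m}\abs{X^{\top}}\abs{X}$ from \cref{lemma 2.2}, the observation that each inner product $\abs{x_{i}}^{\top}\abs{x_{j}}$ is at most $t_{1}c^{2}$ for the $v^{2}$ dense--dense column pairs and at most $t_{2}c^{2}$ otherwise, and an aggregation (via $\norm{E_{A}}_{2}\le\norm{\abs{E_{A}}}_{F}$) giving $\sqrt{v^{2}(t_{1}c^{2})^{2}+(n^{2}-v^{2})(t_{2}c^{2})^{2}}\le(vt_{1}+nt_{2})c^{2}$. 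That half is correct and matches the paper.

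The $E_{B}$ half has the right skeleton --- the self-referential inequality solved for $\norm{E_{B}}_{2}$, exactly as in the paper --- but the specific norm chain you propose contains an invalid link. You bound $\norm{\abs{Y^{\top}}\abs{Y}}_{2}$ ``in terms of $\norm{Y}_{2}^{2}$'' and then use $\norm{Y^{\top}Y}_{2}=\norm{G+sI+E_{B}}_{2}\le\norm{X}_{2}^{2}+s+\norm{E_{A}}_{2}+\norm{E_{B}}_{2}$. However $\norm{\abs{Y^{\top}}\abs{Y}}_{2}=\norm{\abs{Y}}_{2}^{2}$ is \emph{not} controlled by $\norm{Y}_{2}^{2}$: taking entrywise absolute values can inflate the spectral norm by a factor up to $\sqrt{n}$ (e.g.\ a Hadamard matrix), so the only clean bound is $\norm{\abs{Y}}_{2}^{2}\le\norm{\abs{Y}}_{F}^{2}=\norm{Y}_{F}^{2}=\tr(Y^{\top}Y)$. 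That trace route is what the paper uses, and the distinction matters for the constants: the trace turns $sI$ into $ns$ and $E_{A},E_{B}$ into $n\norm{E_{A}}_{2}$, $n\norm{E_{B}}_{2}$, and these $n$-fold terms are harmless only because $s\le\frac{1}{100n}(vt_{1}+nt_{2})c^{2}$ from \eqref{eq:s1} and $mn\uu\le\frac{1}{64}$, while the dominant term is $\norm{X}_{F}^{2}\le(vt_{1}+nt_{2})c^{2}$ with no extra factor of $n$ because the Frobenius norm counts the nonzeros directly. If you instead patched your chain with the crude $\norm{\abs{Y}}_{2}^{2}\le n\norm{Y}_{2}^{2}$, the leading term would acquire a spurious factor of $n$ and you would land at $O(n(n+1)\uu(vt_{1}+nt_{2})c^{2})$ rather than \eqref{eq:414}. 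Replace the $2$-norm bookkeeping for $Y$ by the Frobenius/trace bookkeeping and your argument closes as the paper's does.
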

\begin{proof}
For $E_{A}$ in \eqref{eq:E1}, with the settings for the sparse $X$ before, we can bound $\norm{X}_{F}$ as $\norm{X}_{F} \le kc^{2}$. Therefore, according to Lemma~\ref{lemma 2.2}, we can estimate $\norm{E_{A}}_{2}$ as
\begin{equation}
\begin{split}
\norm{E_{A}}_{2} &\le \norm{E_{A}}_{F} \nonumber \\ &\le \gamma_{m} \cdot \norm{X}_{F}^{2} \nonumber \\ &\le 1.1m\uu \cdot kc^{2}. \nonumber
\end{split}
\end{equation}
\eqref{eq:413} is proved.

For $\norm{E_{B}}_{2}$, with Lemma~\ref{lemma 2.3}, \eqref{eq:E1} and \eqref{eq:E2}, we can get
\begin{equation}
\norm{E_{B}}_{2} \le \norm{\abs{E_{B}}}_{F} \le \gamma_{n+1}\norm{\abs{Y}}_{F}^{2}. \label{eq:yf2}
\end{equation}
In fact, $\norm{Y}_{F}^{2}$ equals to $\tr(Y^{\top}Y)$, which denotes the trace of the gram matrix $Y^{\top}Y$. With the setting of the sparse $X$, \eqref{eq:E1} and \eqref{eq:E2}, we can get
\begin{equation} \label{eq:try}
\begin{split}
\gamma_{n+1}\norm{Y}_{F}^{2} &\le \gamma_{n+1}\tr(Y^{\top}Y) \\ &\le \gamma_{n+1}\tr(X^{\top}X+sI+E_{A}+E_{B}) \\ &\le \gamma_{n+1}(\norm{X}_{F}^{2}+sn+n\norm{E_{A}}_{2}+n\norm{E_{B}}_{2}) \\ &\le \gamma_{n+1} \cdot (kc^{2}+sn+n\norm{E_{A}}_{F}+n\norm{E_{B}}_{F}). 
\end{split}
\end{equation}
We combine \eqref{eq:yf2} and \eqref{eq:try} with \eqref{eq:45}, \eqref{eq:46}, \eqref{eq:s1} and \eqref{eq:413}, and we can bound $\norm{E_{B}}_{2}$ as
\begin{equation}
\begin{split}
\norm{E_{B}}_{2} &\le \frac{\gamma_{n+1} \cdot (1+1.1mn\uu+wn)}{1-\gamma_{n+1} \cdot n} \cdot kc^{2} \nonumber \\
&\le \frac{1.02(n+1){\uu} \cdot (1+1.1mn\uu+0.01)}{1-1.02(n+1)\uu \cdot n} \cdot kc^{2} \nonumber \\
&\le \frac{1.02(n+1)\uu\cdot(1+1.1\cdot\frac{1}{64}+0.01)}{1-\frac{1.02}{64}} \cdot kc^{2} \nonumber \\
&\le 1.1(n+1)\uu \cdot kc^{2}. \nonumber
\end{split}
\end{equation}
Here, $w=\frac{s}{kc^{2}}=11(m\uu+(n+1)\uu)$. \eqref{eq:414} is proved.
\end{proof}
\begin{remark}
The steps to prove \eqref{eq:414} contain a step utilizing the properties of the traces in \eqref{eq:try}. This idea of proof has not occurred in the works of CholeskyQR before. Although \eqref{eq:414} seems similar to the corresponding results in \cite{New, Shifted, error}, our ideas in the theoretical analysis are distinguished from those in the previous works.
\end{remark}

\begin{lemma}[Estimating $\norm{Y^{-1}}_{2}$ and $\norm{XY^{-1}}_{2}$]
\label{lemma 2.11}
For $\norm{Y^{-1}}_{2}$ and $\norm{XY^{-1}}_{2}$ in \eqref{eq:E3}, when \eqref{eq:s1} is satisfied, we have
\begin{align}
\norm{Y^{-1}}_{2} &\le \frac{1}{\sqrt{(\sigma_{min}(X))^{2}+0.9s}}, \label{eq:L1} \\
\norm{XY^{-1}}_{2} &\le 1.5. \label{eq:L2}
\end{align}
\end{lemma}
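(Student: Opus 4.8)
The plan is to reduce both estimates to a single auxiliary inequality and then feed it into two different eigenvalue arguments. Substituting \eqref{eq:E1} into \eqref{eq:E2} gives the combined identity $Y^{\top}Y = X^{\top}X + sI + E_{A} + E_{B}$, so that $E_A+E_B = Y^\top Y - (X^\top X + sI)$ is a symmetric perturbation of the symmetric positive definite matrix $X^\top X + sI$. The first step I would carry out is to bound the total error: by \cref{lemma 2.20} and the lower bound on $s$ in \eqref{eq:s1},
\begin{equation}
\norm{E_A}_2 + \norm{E_B}_2 \le 1.1(m\uu+(n+1)\uu)(vt_1+nt_2)c^2 \le \tfrac{1}{10}s, \nonumber
\end{equation}
since $s \ge 11(m\uu+(n+1)\uu)(vt_1+nt_2)c^2$. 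This one inequality is the workhorse for both \eqref{eq:L1} and \eqref{eq:L2}.

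For \eqref{eq:L1} I would pass to the smallest eigenvalue of $Y^\top Y$. Because $Y^\top Y$ and $X^\top X + sI$ are symmetric positive (semi)definite, their smallest singular values coincide with their smallest eigenvalues, and $\sigma_{min}(X^\top X + sI) = \sigma_{min}(X)^2 + s$. Applying \cref{lemma 2.1} with the symmetric perturbation $E_A+E_B$ then yields $\sigma_{min}(Y)^2 = \sigma_{min}(Y^\top Y) \ge \sigma_{min}(X)^2 + s - (\norm{E_A}_2 + \norm{E_B}_2) \ge \sigma_{min}(X)^2 + 0.9s$. In particular $Y$ is invertible, so $Y^{-1}$ is well defined, and taking the reciprocal of the square root gives $\norm{Y^{-1}}_2 = 1/\sigma_{min}(Y) \le 1/\sqrt{\sigma_{min}(X)^2 + 0.9s}$, which is exactly \eqref{eq:L1}.

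For \eqref{eq:L2} the key is that $X$ and $Y$ need not commute, so I would not bound $\norm{XY^{-1}}_2$ by $\norm{X}_2\norm{Y^{-1}}_2$ (which degrades as $s\to 0$), but instead work through the quadratic form $\norm{XY^{-1}}_2^2 = \lambda_{max}(Y^{-\top}X^\top X Y^{-1})$. Substituting $X^\top X = Y^\top Y - sI - E_A - E_B$ gives $Y^{-\top}X^\top X Y^{-1} = I - sY^{-\top}Y^{-1} - Y^{-\top}(E_A+E_B)Y^{-1}$. Since $sY^{-\top}Y^{-1}\succeq 0$, discarding this term can only increase the largest eigenvalue, so $\norm{XY^{-1}}_2^2 \le 1 + \norm{Y^{-1}}_2^2(\norm{E_A}_2 + \norm{E_B}_2)$. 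Inserting the crude form $\norm{Y^{-1}}_2^2 \le 1/(0.9s)$ from \eqref{eq:L1} together with $\norm{E_A}_2 + \norm{E_B}_2 \le 0.1s$ yields $\norm{XY^{-1}}_2^2 \le 1 + \tfrac{1}{9}$, hence $\norm{XY^{-1}}_2 \le 1.5$ with room to spare.

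The main obstacle is conceptual rather than computational: it is the noncommutativity handled in \eqref{eq:L2}. The naive product bound does not produce an $O(1)$ constant, and the quadratic-form rewrite — which cancels the bulk of $X^\top X$ against $Y^\top Y$ and leaves only the small shift and error contributions — is what makes the clean constant possible. The one point requiring care is the sign of the $-sY^{-\top}Y^{-1}$ term, so that dropping it (rather than keeping it) is the correct direction for an upper bound on $\lambda_{max}$.
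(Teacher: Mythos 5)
Your proof is correct, and it is essentially the argument the paper relies on: the paper itself gives no proof of \cref{lemma 2.11}, deferring entirely to \cite{New, Shifted}, and your route (Weyl's inequality on $Y^{\top}Y = X^{\top}X + sI + E_{A} + E_{B}$ with $\norm{E_{A}}_{2}+\norm{E_{B}}_{2}\le 0.1s$ for \eqref{eq:L1}, then the quadratic-form cancellation $Y^{-\top}X^{\top}XY^{-1} = I - sY^{-\top}Y^{-1} - Y^{-\top}(E_{A}+E_{B})Y^{-1}$ for \eqref{eq:L2}) is exactly the standard derivation used in those references. Your bound $\norm{XY^{-1}}_{2}\le\sqrt{10}/3$ is in fact sharper than the stated $1.5$, so the lemma follows with room to spare.
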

\begin{proof}
The steps to prove \eqref{eq:L1} and \eqref{eq:L2} are the same as those in \cite{New, Shifted}.
\end{proof}

\begin{lemma}[Estimating $\norm{\Delta Y_{i}}_{2}$]
\label{lemma 2.12}
For $\norm{\Delta Y_{i}}_{2}$ in \eqref{eq:E3}, when \eqref{eq:s1} is satisfied, we have
\begin{equation}
\norm{\Delta Y_{i}}_{2} \le 1.03n\uu \cdot c\sqrt{k}. \label{eq:L3}
\end{equation}
\end{lemma}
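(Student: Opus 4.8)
The plan is to read the bound off \cref{lemma 2.4} and then control $\norm{Y}_F$. Recall from the discussion after \eqref{eq:E4} that each row $w_i^\top$ is computed by solving the triangular system $(Y^\top+(\Delta Y_i)^\top)(w_i^\top)^\top=(x_i^\top)^\top$, in which $Y^\top$ is lower triangular. Applying \cref{lemma 2.4} with coefficient matrix $Y^\top$ gives the componentwise estimate $\abs{(\Delta Y_i)^\top}\le\gamma_n\abs{Y^\top}$, equivalently $\abs{\Delta Y_i}\le\gamma_n\abs{Y}$. The key structural point is that the right-hand side does not depend on $i$: only the data vector $x_i$ varies from row to row while the triangular factor $Y$ is fixed, so the estimate is uniform in $i$, exactly as announced in the text following \eqref{eq:E4}.

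First I would pass from entries to norms. Since the Frobenius norm depends only on the magnitudes of the entries and dominates the spectral norm, $\norm{\Delta Y_i}_2\le\norm{\abs{\Delta Y_i}}_F\le\gamma_n\norm{\abs{Y}}_F=\gamma_n\norm{Y}_F$. Using $\gamma_n\le1.02n\uu$ from \cref{lemma 2.2}, the whole statement reduces to showing that $\norm{Y}_F$ is essentially bounded by $c\sqrt{nt_1}$.

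Next I would estimate $\norm{Y}_F$ through the trace identity $\norm{Y}_F^2=\tr(Y^\top Y)$, as in the proof of \cref{lemma 2.20}. Combining \eqref{eq:E1} and \eqref{eq:E2} yields $Y^\top Y=X^\top X+sI+E_A+E_B$, hence $\norm{Y}_F^2=\norm{X}_F^2+sn+\tr(E_A)+\tr(E_B)$. I would bound $\norm{X}_F^2\le nt_1c^2$ directly from \cref{def:41} (each of the $n$ columns has at most $t_1$ nonzero entries of modulus at most $c$, using $t_2\le t_1$), and $sn\le\frac{1}{100}nt_1c^2$ from the upper shift bound $s\le\phi\le\frac{1}{100}t_1c^2$ in \eqref{eq:s1}. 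The decisive step is to handle the two trace terms \emph{sharply}, via $\tr(E_A)\le\gamma_m\norm{X}_F^2$ and $\tr(E_B)\le\gamma_{n+1}\norm{Y}_F^2$ (summing the diagonal bounds from \cref{lemma 2.2} and \cref{lemma 2.3}), then rearranging $(1-\gamma_{n+1})\norm{Y}_F^2\le(1+\gamma_m)\norm{X}_F^2+sn$. Because $mn\uu\le\frac{1}{64}$ and $n(n+1)\uu\le\frac{1}{64}$ give $\gamma_m,\gamma_{n+1}\le\frac{1.02}{64n}$, both trace contributions are negligible, leaving $\norm{Y}_F^2\le(1+\tfrac{1}{100})nt_1c^2$ up to lower-order terms, so $\norm{Y}_F\le\sqrt{1.01}\,c\sqrt{nt_1}$. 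Multiplying by $\gamma_n\le1.02n\uu$ then produces the claimed $\norm{\Delta Y_i}_2\le1.03\,n\sqrt{n}\,\uu\,c\sqrt{t_1}$.

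The main obstacle is precisely this constant bookkeeping. The cruder estimates $\tr(E_A)\le n\norm{E_A}_2$ and $\tr(E_B)\le n\norm{E_B}_2$ used inside \cref{lemma 2.20} lose a full factor of $n$ and would inflate the bound on $\norm{Y}_F^2$ to roughly $1.04\,nt_1c^2$, giving a leading constant near $1.04$ rather than $1.03$ after multiplying by the $1.02$ from $\gamma_n$. Keeping the target clean therefore hinges on the sharp diagonal trace bounds above, which let the $E_A,E_B$ corrections be absorbed into lower-order $O(1/n)$ terms; the only order-one excess that survives is the $\frac{1}{100}$ from the shift. A secondary care point is the reduction $\norm{X}_F^2\le nt_1c^2$, which cleanly folds the sparse columns into the dense count using $t_2\le t_1$ without needing the finer relations $v\ll n$, $t_2\ll t_1$.
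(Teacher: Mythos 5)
Your argument is correct and follows essentially the same route as the paper's: both reduce the claim via \cref{lemma 2.4} to bounding $\gamma_{n}\norm{Y}_{F}$, and both control $\norm{Y}_{F}$ through the diagonal of $Y^{\top}Y=X^{\top}X+sI+E_{A}+E_{B}$ combined with the sparsity bound $\norm{x_{j}}_{2}\le c\sqrt{t_{1}}$ and $s\le\frac{1}{100}t_{1}c^{2}$. The only cosmetic difference is that the paper writes $\norm{Y}_{F}\le\sqrt{n}[Y]_{g}$ and bounds the largest diagonal entry ($[Y]_{g}^{2}\le[X]_{g}^{2}+s+\norm{E_{A}}_{2}+\norm{E_{B}}_{2}\le1.011\,t_{1}c^{2}$), whereas you sum all diagonal entries via the trace; both yield the same final constant.
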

\begin{proof}
Regarding \eqref{eq:E3}, based on Lemma~\ref{lemma 2.4}, we can have
\begin{equation} 
\norm{\Delta Y_{i}}_{2} \le \gamma_{n} \cdot \norm{Y}_{F} \le 1.02n\uu \cdot \norm{Y}_{F}. \label{eq:434}
\end{equation}
With \eqref{eq:s1}, \eqref{eq:413} and \eqref{eq:414}, we can have
\begin{equation} \label{eq:abs}
\begin{split}
\norm{E_{A}}_{2}+\norm{E_{B}}_{2} &\le 1.1(m\uu+(n+1)\uu) \cdot kc^{2} \\ &\le 0.1s. 
\end{split}
\end{equation}
For $\norm{Y}_{F}$, similar to \eqref{eq:try} and based on \eqref{eq:s1}, \eqref{eq:E1} and \eqref{eq:E2}, we can have
\begin{equation} \label{eq:432}
\begin{split}
\norm{Y}_{F}^{2} &\le \norm{X}_{F}^{2}+sn+n \cdot (\norm{E_{A}}_{2}+\norm{E_{B}}_{2}) \\ &\le kc^{2}+1.1sn \\ &\le 1.011kc^{2}. 
\end{split}
\end{equation}
Therefore, with \eqref{eq:432}, it is easy to see that
\begin{equation}
\norm{Y}_{F} \le 1.006c\sqrt{k}. \label{eq:433}
\end{equation}
We put \eqref{eq:433} into \eqref{eq:434} and we can have \eqref{eq:L3}. \eqref{eq:L3} is proved.
\end{proof}

\begin{lemma}[Estimating $\norm{\Delta X}_{2}$]
\label{lemma 2.13}
For $\norm{\Delta X}_{2}$ in \eqref{eq:E4}, when \eqref{eq:s1} is satisfied, we have
\begin{equation}
\norm{\Delta X}_{2} \le \frac{1.09n\uu \cdot kc^{2}}{\sqrt{(\sigma_{min}(X))^{2}+0.9s}}. \label{eq:L4}
\end{equation}
\end{lemma}
\begin{proof}
Similar to the approach in \cite{New, Shifted}, we can express \eqref{eq:E3} as
\begin{equation}
w_{i}^{\top}=x_{i}^{\top}(Y+\Delta Y_{i})^{-1}=x_{i}^{\top}(I+Y^{-1}\Delta Y_{i})^{-1}Y^{-1}. \label{eq:217}
\end{equation}
When we define
\begin{equation}
(I+Y^{-1}\Delta Y_{i})^{-1}=I+ \theta_{i}, \label{eq:218}
\end{equation}
where
\begin{equation}
\theta_{i}:=\sum_{j=1}^{\infty}(-Y^{-1}\Delta Y_{i})^{j}, \label{eq:219}
\end{equation}
based on \eqref{eq:E3} and \eqref{eq:E4}, we can have
\begin{equation}
\Delta{x_{i}}^{\top}=x_{i}^{\top}\theta_{i}. \label{eq:220}
\end{equation}
$\Delta x_{i}$ is the $i$-th row of $\Delta X$. Based on \eqref{eq:46}, \eqref{eq:s1}, \eqref{eq:L1} and \eqref{eq:L3}, when \eqref{eq:s1} is satisfied, we can have
\begin{equation} \label{eq:435}
\begin{split}
\norm{Y^{-1}\Delta Y_{i}}_{2} &\le \norm{Y^{-1}}_{2}\norm{\Delta Y_{i}}_{2} \\ &\le \frac{1.03n\uu \cdot c\sqrt{k}}{\sqrt{(\sigma_{min}(X))^{2}+0.9s}} \\ &\le \frac{1.03n\uu \cdot c\sqrt{k}}{\sqrt{0.9s}} \\ &\le \frac{1.03n\uu \cdot c\sqrt{k}}{\sqrt{9.9(m\uu+(n+1)\uu) \cdot kc^{2}}} \\ &\le \frac{1.03}{\sqrt{9.9}} \cdot n\sqrt{\uu} \\ &\le 0.05. 
\end{split}
\end{equation}
For \eqref{eq:219}, with \eqref{eq:L1}, \eqref{eq:L3} and \eqref{eq:435}, we can have
\begin{equation} \label{eq:436}
\begin{split}
\norm{\theta_{i}}_{2} &\le \sum_{j=1}^{\infty}(\norm{Y^{-1}}_{2}\norm{\Delta Y_{i}}_{2})^{j} \\ &= \frac{\norm{Y^{-1}}_{2}\norm{\Delta Y_{i}}_{2}}{1-\norm{Y^{-1}}_{2}\norm{\Delta Y_{i}}_{2}} \\ &\le \frac{1}{0.95} \cdot \frac{1.03n\uu \cdot c\sqrt{k}}{\sqrt{(\sigma_{min}(X))^{2}+0.9s}} \\ &\le \frac{1.09n\uu \cdot c\sqrt{k}}{\sqrt{(\sigma_{min}(X))^{2}+0.9s}}. 
\end{split}
\end{equation}
Based on \eqref{eq:220}, it is easy to see that
\begin{equation}
\norm{\Delta  x_{i}^{\top}}_{2} \le \norm{x_{i}^{\top}}_{2}\norm{\theta_{i}}_{2}. \label{eq:437}
\end{equation}
Therefore, similar to the step in \cite{Shifted}. together with the setting of the sparse $X$ and \eqref{eq:437}, we can have
\begin{equation} \label{eq:delta1}
\begin{split}
\norm{\Delta X}_{2} &\le \norm{\Delta X}_{F} \\ &\le \norm{X}_{F}\norm{\theta_{i}}_{2} \\ &\le c\sqrt{k} \cdot \norm{\theta_{i}}_{2}. 
\end{split}
\end{equation}
We put \eqref{eq:437} into \eqref{eq:delta1} and we can have \eqref{eq:L4}. \eqref{eq:L4} is proved.
\end{proof}

\subsection{Proof of Theorem~\ref{THM 4.2}}
Here, we prove Theorem~\ref{THM 4.2} with Lemma~\ref{lemma 2.11}-Lemma~\ref{lemma 2.13}. 

\begin{proof}
The general approach to proving Theorem~\ref{THM 4.2} is similar to those in \cite{New, Shifted}. However, we establish connections between the sparsity of $X$ and QR factorization. Our proof will be divided into three parts: estimating $\norm{W^{\top}W-I}_{F}$, estimating $\norm{\Delta X}_{F}$, and analyzing the relationship between $\kappa_{2}(X)$ and $\kappa_{2}(W)$.

\subsubsection{Estimating $\norm{W^{\top}W-I}_{2}$}
With \eqref{eq:E1}-\eqref{eq:E4}, we can have
\begin{equation}
\begin{split}
W^{\top}W &= Y^{-\top}(X+\Delta X)^{\top}(X+\Delta X)Y^{-1} \nonumber \\ 
&= Y^{-\top}X^{\top}XY^{-1}+Y^{-\top}X^{\top}\Delta XY^{-1} \nonumber \\ 
&+ Y^{-\top}\Delta X^{\top}XY^{-1}+Y^{-\top}\Delta X^{\top}\Delta XY^{-1} \nonumber \\
&= I-Y^{-\top}(sI+E_{1}+E_{2})Y^{-1}+(XY^{-1})^{\top}\Delta XY^{-1} \nonumber \\ 
&+ Y^{-\top}\Delta X^{\top}(XY^{-1})+Y^{-\top}\Delta X^{\top}\Delta XY^{-1}. \nonumber
\end{split}
\end{equation}
Therefore, we can have
\begin{equation} \label{eq:440}
\begin{split}
\norm{W^{\top}W-I}_{2} &\le \norm{Y^{-1}}_{2}^{2}(\norm{E_{A}}_{2}+\norm{E_{B}}_{2}+s)+2\norm{Y^{-1}}_{2}\norm{XY^{-1}}_{2}\norm{\Delta X}_{2} \\ &+ \norm{Y^{-1}}_{2}^{2}\norm{\Delta X}_{2}^{2}. 
\end{split}
\end{equation}
According to \eqref{eq:L1} and \eqref{eq:abs}, we can have
\begin{equation} \label{eq:441} 
\begin{split}
\norm{Y^{-1}}_{2}^{2}(\norm{E_{A}}_{2}+\norm{E_{B}}_{2}+s) &\le \frac{1.1s}{(\sigma_{min}(X))^{2}+0.9s} \\ &\le 1.23. 
\end{split}
\end{equation}
Based on \eqref{eq:L1}, \eqref{eq:L2} and \eqref{eq:L4}, we can have
\begin{equation} \label{eq:442}
\begin{split}
2\norm{Y^{-1}}_{2}\norm{XY^{-1}}_{2}\norm{\Delta X}_{2} &\le 2 \cdot \frac{1}{\sqrt{(\sigma_{min}(X))^{2}+0.9s}} \cdot 1.5 \\ &\cdot \frac{1.09n\uu \cdot kc^{2}}{\sqrt{(\sigma_{min}(X))^{2}+0.9s}} \\ &\le \frac{3.27n\uu \cdot kc^{2}}{(\sigma_{min}(X))^{2}+0.9s} \\ &\le \frac{3.27 n\uu \cdot kc^{2}}{9.9(m\uu+(n+1)\uu) \cdot kc^{2}} \\ &\le 0.34. 
\end{split}
\end{equation}
With \eqref{eq:L1} and \eqref{eq:L4}, we can have
\begin{equation} \label{eq:443}
\begin{split}
\norm{Y^{-1}}_{2}^{2}\norm{\Delta X}_{2}^{2} &\le \frac{1}{(\sigma_{min}(X))^{2}+0.9s} \cdot \frac{(1.09n\uu \cdot kc^{2})^{2}}{(\sigma_{min}(X))^{2}+0.9s} \\ &\le \frac{(1.09n\uu \cdot kc^{2})^{2}}{[9.9(m\uu+(n+1)\uu) \cdot kc^{2}]^{2}} \\ &\le 0.013. 
\end{split}
\end{equation}
Therefore, we put \eqref{eq:441}-\eqref{eq:443} into \eqref{eq:440} and we can have
\begin{equation} \label{eq:444} 
\begin{split}
\norm{W^{\top}W-I}_{2} &\le 1.23+0.34+0.013 \\ &\le 1.59.
\end{split}
\end{equation}
With \eqref{eq:444}, we can have
\begin{equation}
\norm{W}_{2} \le 1.61. \label{eq:446}
\end{equation}

\subsubsection{Estimating $\norm{\Delta X}_{F}$}
Regarding $\norm{\Delta X}_{F}$ in \eqref{eq:E4}, similar to the results in \cite{New, Shifted}, together with \eqref{eq:L3} and \eqref{eq:446}, we can have
\begin{equation} \label{eq:447}
\begin{split}
\norm{\Delta X}_{F} &= \norm{WY-X}_{F} \\ &\le \norm{W}_{F} \cdot \norm{\Delta Y_{i}}_{2} \\ &\le 1.61\sqrt{n} \cdot 1.03n\uu \cdot c\sqrt{k} \\ &\le 1.66n\sqrt{n}\uu \cdot c\sqrt{k} \\ &= 1.66n\sqrt{n}\uu \cdot l\norm{X}_{2}. 
\end{split}
\end{equation}
Here, $l=\frac{c\sqrt{k}}{\norm{X}_{2}}$. 

\subsubsection{The relationship between $\kappa_{2}(X)$ and $\kappa_{2}(W)$}
In order to estimate $\kappa_{2}(W)$, since we have already estimated $\norm{W}_{2}$, we only need to estimate $\sigma_{min}(W)$. Based on Lemma~\ref{lemma 2.1}, we can have
\begin{equation}
\sigma_{min}(W) \ge \sigma_{min}(XY^{-1})-\norm{\Delta XY^{-1}}_{2}. \label{eq:448}
\end{equation}
With \eqref{eq:L1} and \eqref{eq:447}, we can have
\begin{equation}
\norm{\Delta XY^{-1}}_{2} \le \norm{\Delta X}_{2}\norm{Y^{-1}}_{2} \le \frac{1.66n\sqrt{n}\uu \cdot l\norm{X}_{2}}{\sqrt{(\sigma_{min}(X))^{2}+0.9s}}. \label{eq:449}
\end{equation}
The same as the result in \cite{Shifted}, we can have
\begin{equation}
\sigma_{min}(XY^{-1}) \ge \frac{\sigma_{min}(X)}{\sqrt{(\sigma_{min}(X))^{2}+s}} \cdot 0.9. \label{eq:450}
\end{equation}
Therefore, based on \eqref{eq:47}, we put \eqref{eq:449} and \eqref{eq:450} into \eqref{eq:448} and we can have
\begin{equation} \label{eq:451} 
\begin{split}
\sigma_{min}(W) &\ge \frac{0.9\sigma_{min}(X)}{\sqrt{(\sigma_{min}(X))^{2}+s}}-\frac{1.66n\sqrt{n}\uu \cdot l\norm{X}_{2}}{\sqrt{(\sigma_{min}(X))^{2}+0.9s}} \\ &\ge \frac{0.9}{\sqrt{(\sigma_{min}(X))^{2}+s}}(\sigma_{min}(X)-\frac{1.66}{0.9 \cdot \sqrt{0.9}} \cdot n\sqrt{n}\uu \cdot l\norm{X}_{2}) \\ &\ge \frac{\sigma_{min}(X)}{2\sqrt{(\sigma_{min}(X))^{2}+s}} \\ &= \frac{1}{2\sqrt{1+\alpha_{0}(\kappa_{2}(X))^{2}}}, 
\end{split}
\end{equation}
where $\alpha_{0}=\frac{s}{\norm{X}_{2}^{2}}$. With \eqref{eq:446} and \eqref{eq:451}, we can have
\begin{equation}
\kappa_{2}(W) \le 3.22 \cdot \sqrt{1+\alpha_{0}(\kappa_{2}(X))^{2}}. \nonumber
\end{equation}
Therefore, \eqref{eq:417} is proved. 

If we take $s=11(m\uu+(n+1)\uu) \cdot kc^{2}=11(m\uu+(n+1)\uu) \cdot l^{2}$, with the setting of the sparse $X$, we can have $\alpha_{0}=11(m\uu+(n+1)\uu) \cdot \frac{kc^{2}}{\norm{X}_{2}^{2}}=11(m\uu+(n+1)\uu) \cdot l^{2} \ge 11(m\uu+(n+1)\uu)$. When $\kappa_{2}(X)$ is large, \textit{e.g.}, $\kappa_{2}(X) \ge \uu^{-\frac{1}{2}}$, $\alpha_{0}(\kappa_{2}(X))^{2} \ge 11(m+n)>>1$. Therefore, we can have
\begin{equation}
3.22 \cdot \sqrt{1+\alpha_{0}(\kappa_{2}(X))^{2}} \approx 3.22 \cdot \sqrt{\alpha_{0}} \cdot \kappa_{2}(X). \nonumber
\end{equation}
So it is easy to see that 
\begin{equation}
\kappa_{2}(W) \le 3.22 \cdot \sqrt{\alpha_{0}} \cdot \kappa_{2}(X). \label{eq:relation}
\end{equation}
Using the similar method as that in \cite{New, Shifted}, in order to receive a sufficient condition for SCholeskyQR3, we let
\begin{equation}
\kappa_{2}(W) \le 3.22 \cdot \sqrt{\alpha_{0}} \cdot \kappa_{2}(X) \le \frac{1}{8\sqrt{mn\uu+n(n+1)\uu}}. \label{eq:q2} 
\end{equation}
We put $\alpha_{0}=\frac{s}{\norm{X}_{2}^{2}}=11(m\uu+(n+1)\uu) \cdot l^{2}$ into \eqref{eq:q2} and we can have \eqref{eq:418}. \eqref{eq:418} is proved. Therefore, Theorem~\ref{THM 4.2} is proved.
\end{proof}

\subsection{Proof of Theorem~\ref{THM 4.3}}
In this section, we prove Theorem~\ref{THM 4.3}. Our method to prove Theorem~\ref{THM 4.3} is similar to that in \cite{New}.

\begin{proof}
When $s=11(m\uu+(n+1)\uu) \cdot kc^{2}$ and $\kappa_{2}(X)$ satisfies \eqref{eq:418}, we can easily derive \eqref{eq:q} with $\kappa_{2}(X)$, which is the same as that in \cite{error}.

For the residual of SCholeskyQR3, $\norm{QR-X}_{F}$, we express the CholeskyQR2 after SCholeskyQR with the error matrices as follows.
\begin{align}
C-W^{\top}W &= E_{1}, \nonumber \\
D^{\top}D-C &= E_{2}, \nonumber \\
VD-W &= E_{3}, \label{eq:457} \\
DY-N &= E_{4}. \label{eq:458} \\
B-V^{\top}V &= E_{5}, \nonumber \\
J^{\top}J-B &= E_{6}, \nonumber \\
QJ-V &= E_{7}, \label{eq:461} \\
JN-R &= E_{8}. \label{eq:462}
\end{align}
Here, the calculation of $R$ in Algorithm~\ref{alg:Shiftedt2} is divided into two steps, that is, \eqref{eq:458} and \eqref{eq:462}. Similar to that of  \cite{error}, $Z$ in Algorithm~\ref{alg:Shiftedt2} satisfies $Z=JD$ without error matrices. Therefore, $R=ZY$ should be written as $R=JDY$ without considering rounding errors. In order to simplify rounding error analysis of this step, we write the multiplication of $D$ and $Y$ with error matrices as \eqref{eq:458} and the multiplication of $J$ and $N$ can be written as \eqref{eq:462}. Based on \eqref{eq:457}-\eqref{eq:462}, we can have
\begin{equation} \label{eq:Q2R4}
\begin{split}
QR &= (V+E_{7})J^{-1}(JN-E_{8}) \\ &= (V+E_{7})N-(V+E_{7})J^{-1}E_{8} \\ &= VN+E_{7}N-QE_{8} \\ &= (W+E_{3})D^{-1}(DY-E_{4})+E_{7}N-QE_{8} \\ &= (W+E_{3})Y-(W+E_{3})D^{-1}E_{4}+E_{7}N-QE_{8} \\ &= WY+E_{3}Y-VE_{4}+E_{7}N-QE_{8}. 
\end{split}
\end{equation}
Therefore, based on \eqref{eq:Q2R4}, we can get
\begin{equation} \label{eq:463}
\begin{split}
\norm{QR-X}_{F} &\le \norm{WY-X}_{F}+\norm{E_{3}}_{F}\norm{Y}_{2}+\norm{V}_{2}\norm{E_{4}}_{F} \\ &+ \norm{E_{7}}_{F}\norm{N}_{2}+\norm{Q}_{2}\norm{E_{8}}_{F}. 
\end{split}
\end{equation}
Similar to \eqref{eq:E3}, we rewrite \eqref{eq:457} through rows as 
$v_{i}^{\top}=w_{i}^{\top}(D+ \Delta D_{i})^{-1}$ where $v_{i}^{\top}$ and $w_{i}^{\top}$ represent the $i$-th rows of $V$ and $W$. Similar to \eqref{eq:432} and with \eqref{eq:446}, we can bound $\norm{\Delta D_{i}}_{2}$ as
\begin{equation} \label{eq:466} 
\begin{split}
\norm{\Delta D_{i}}_{2} &\le 1.02n\uu \cdot \norm{D}_{F} \\ &\le 1.02n\uu \cdot 1.001\norm{W}_{F} \\ &\le 1.02n\sqrt{n}\uu \cdot 1.001\norm{W}_{2} \\ &\le 1.65n\sqrt{n}\uu. 
\end{split}
\end{equation}
Similar to the results in \cite{error} and with \eqref{eq:446}, $\norm{D}_{2}$ can be bounded as
\begin{equation} \label{eq:468} 
\begin{split}
\norm{D}_{2} &\le 1.001\norm{W}_{2} \\ &\le 1.62. 
\end{split}
\end{equation}
The same as that in \cite{Shifted, error}, we can get
\begin{align}
\norm{Y}_{2} &\le 1.006\norm{X}_{2}, \label{eq:464} \\
\norm{V}_{2} &\le \frac{\sqrt{69}}{8}. \label{eq:467} 
\end{align}
When $l=\frac{c\sqrt{t_{1}}}{\norm{X}_{2}}$, together with Lemma~\ref{lemma 2.2}, \eqref{eq:433} and \eqref{eq:466}-\eqref{eq:467}, we can bound $\norm{E_{3}}_{F}$ and $\norm{E_{4}}_{F}$ as
\begin{equation} \label{eq:469}
\begin{split}
\norm{E_{3}}_{F} &\le \norm{V}_{F} \cdot \norm{\Delta D_{i}}_{2} \\ &\le \frac{\sqrt{69n}}{8} \cdot 1.65n\sqrt{n}\uu \\ &\le 1.72n^{2}\uu, 
\end{split}
\end{equation}
\begin{equation} \label{eq:470}
\begin{split}
\norm{E_{4}}_{F} &\le \gamma_{n} \cdot (\norm{D}_{F} \cdot \norm{Y}_{F}) \\ &\le \gamma_{n} \cdot (\sqrt{n}\norm{D}_{2} \cdot \sqrt{n}\norm{Y}_{2}) \\ &\le 1.02n\uu \cdot 1.62\sqrt{n} \cdot 1.006\sqrt{n}\norm{X}_{2} \\ &\le 1.67n^{2}\uu\norm{X}_{2}. 
\end{split}
\end{equation}
Moreover, with \eqref{eq:46}, \eqref{eq:468}, \eqref{eq:464} and \eqref{eq:470}, $\norm{N}_{F}$ can be bounded as
\begin{equation} \label{eq:471}
\begin{split}
\norm{N}_{2} &\le \norm{D}_{2}\norm{Y}_{2}+\norm{E_{4}}_{2} \\ &\le 1.62 \cdot 1.006\norm{X}_{2}+1.67n^{2}\uu\norm{X}_{2} \\ &= 1.66\norm{X}_{2},  
\end{split}
\end{equation}
\begin{equation} \label{eq:472}
\begin{split}
\norm{N}_{F} &\le \norm{D}_{2}\norm{Y}_{F}+\norm{E_{4}}_{F} \\ &\le 1.62 \cdot 1.006c\sqrt{k}+1.67n^{2}\uu\norm{X}_{2} \\ &= (1.63l+1.67n^{2}\uu) \cdot \norm{X}_{2},  
\end{split}
\end{equation}
If we rewrite \eqref{eq:461} through rows as $q_{i}^{\top}=v_{i}^{\top}(J+\Delta J_{i})^{-1}$ where $q_{i}^{\top}$ and $v_{i}^{\top}$ represent the $i$-th rows of $Q$ and $V$, similar to \eqref{eq:466} and with \eqref{eq:467}, we can bound $\norm{\Delta J_{i}}_{2}$ as 
\begin{equation} \label{eq:475} 
\begin{split}
\norm{\Delta J_{i}}_{2} &\le 1.02n\uu \cdot \norm{V}_{F} \\ &\le 1.02n\sqrt{n}\uu \cdot \norm{V}_{2} \\ &\le 1.02n\sqrt{n}\uu \cdot \frac{\sqrt{69}}{8} \\ &\le 1.06n\sqrt{n}\uu. 
\end{split}
\end{equation}
Similar to \eqref{eq:468} and with \eqref{eq:467}, we can bound $\norm{J}_{2}$ as
\begin{equation} \label{eq:476}
\begin{split}
\norm{J}_{2} &\le 1.001\norm{V}_{2} \\ &\le 1.04. 
\end{split}
\end{equation}
According to the corresponding results in \cite{Shifted, error}, we can get
\begin{equation}
\norm{Q}_{2} \le 1.1, \label{eq:474} 
\end{equation}
With Lemma~\ref{lemma 2.2}, \eqref{eq:46} and \eqref{eq:472}-\eqref{eq:474}, we can bound $\norm{E_{7}}_{F}$ and $\norm{E_{8}}_{F}$ as
\begin{equation} \label{eq:477} 
\begin{split}
\norm{E_{7}}_{F} &\le \norm{Q}_{F} \cdot \norm{\Delta J_{i}}_{2}, \\ &\le 1.1\sqrt{n} \cdot 1.06n\sqrt{n}\uu \\ &\le 1.17n^{2}\uu, 
\end{split}
\end{equation}
\begin{equation} \label{eq:478}
\begin{split}
\norm{E_{8}}_{F} &\le \gamma_{n} \cdot (\norm{J}_{F} \cdot \norm{N}_{F}) \\ &\le \gamma_{n} \cdot (\sqrt{n}\norm{J}_{2} \cdot \norm{N}_{F}) \\ &\le 1.02n\uu \cdot 1.04\sqrt{n} \cdot (1.63l+1.67n^{2}\uu)\norm{X}_{2} \\ &\le 1.07n\sqrt{n}\uu \cdot (1.63l+0.03) \cdot \norm{X}_{2}. 
\end{split}
\end{equation}
Therefore, we put \eqref{eq:447}, \eqref{eq:464}-\eqref{eq:471} and \eqref{eq:474}-\eqref{eq:478} into \eqref{eq:463} and we can have \eqref{eq:r}. Theorem~\ref{THM 4.3} is proved.
\end{proof}

\subsection{Proof of Theorem~\ref{THM 4.1}}
In this part, we prove Theorem~\ref{THM 4.1} based on the connection between Theorem~\ref{THM 4.3} and Lemma~\ref{lemma 2.10} from \cite{New}.

\begin{proof}
According to Lemma~\ref{lemma 2.10} and Theorem~\ref{THM 4.3}, we can find that SCholeskyQR3 has the same upper bound of orthogonality with different $s$. Regarding the residual, when we make comparison between $s=11(m\uu+(n+1)\uu) \cdot kc^{2}$ and $s=11(mn\uu+(n+1)\uu) \cdot \norm{X}_{g}^{2}$, we find that when $11(m\uu+(n+1)\uu) \cdot kc^{2} \le 11(mn\uu+n(n+1)\uu) \cdot \norm{X}_{g}^{2}$, $c \le \sqrt{\frac{n\norm{X}_{g}^{2}}{k}}$. Therefore, $l=\frac{c\sqrt{k}}{\norm{X}_{2}} \le p\sqrt{n}$ with $p=\frac{\norm{X}_{g}}{\norm{X}_{2}}$. With such a condition, we can see that the residual in \eqref{eq:r} is in the same level as the case with the original $s$, which guarantees the accuracy when our alternative $s$ is the optimal one compared to the original $s$ in \cite{New}. Therefore, we can take a shifted item $s$ in the form of $s=j_{s}$, where $j_{s}=\min(11(m\uu+(n+1)\uu) \cdot kc^{2}, 11(mn\uu+n(n+1)\uu) \cdot \norm{X}_{g}^{2})$. \eqref{eq:js1} is proved. Therefore, Theorem~\ref{THM 4.1} is proved.
\end{proof}
\begin{remark}
Among all the lemmas used to prove Theorem~\ref{THM 4.2}-Theorem~\ref{THM 4.1}, Lemma~\ref{lemma 2.20} is very important. Our alternative $s$ is based on \eqref{eq:413} and \eqref{eq:414} with our settings of the properties of the sparse $X$. The proof of Lemma~\ref{lemma 2.20} lays a solid foundation for the subsequent analysis. For the sparse $X$, estimating $\norm{X}_{2}$ through the element and structure of $X$ is challenging. We often need to estimate $\norm{X}_{F}$ to replace $\norm{X}_{2}$, which brings improvements on the analysis in this work, \textit{e.g.}, proofs of Lemma~\ref{lemma 2.12}, Lemma~\ref{lemma 2.13} and Theorem~\ref{THM 4.3}. Although we do not calculate $\norm{X}_{F}$ directly in the proof of these theorems, its connection to the structure and the element of $X$ greatly simplifies our analysis, leveraging the relationship between the sparsity of the input $X$ and CholeskyQR-type algorithms. 
\end{remark}

\section{Numerical experiments}
\label{sec:experiments}
In this section, we conduct numerical experiments to examine the properties of SCholeskyQR3 for sparse matrices. We primarily focus on the applicability, accuracy, and CPU time (s) of SCholeskyQR3 with the optimal $s$ in \eqref{eq:js1}. The experiments are performed on our own laptop using MATLAB R2022a, and the specifications of the computer are listed in Table~\ref{tab:1}. For clarity, we refer to $s=j_{s}=\min(11(m\uu+(n+1)\uu) \cdot kc^{2}, 11(mn\uu+n(n+1)\uu) \cdot \norm{X}_{g}^{2})$ in Theorem~\ref{THM 4.1} as 'Alternative', while $s=11(mn\uu+n(n+1)\uu) \cdot \norm{X}_{g}^{2}$ in \cite{New} is referred to as 'Original'.

\begin{table}
\begin{center}
\centering
\caption{The specifications of our computer}
\begin{tabular}{c|c}
\hline
Item & Specification\\
\hline \hline
System & Windows 11 family(10.0, Version 22000) \\
BIOS & GBCN17WW \\
CPU & Intel(R) Core(TM) i5-10500H CPU @ 2.50GHz  -2.5 GHz \\
Number of CPUs / node & 12 \\
Memory size / node & 8 GB \\ 
Direct Version & DirectX 12 \\
\hline
\end{tabular}
\label{tab:1}
\end{center}
\end{table}

\subsection{The applicability and accuracy of the algorithm}
In this part, we do numerical experiments to show the applicability and accuracy of SCholeskyQR3 with our choice of $s$ for the sparse matrices. We take several different examples to demonstrate the effectiveness of such a choice of $s$ for the sparse cases compared to the algorithm with the original $s$ in \cite{New}.

\subsubsection{Example I: the matrix with dense columns}
In the real applications, sparse matrices with dense columns are very common in graph theory, control theory, and certain eigenvalue problems, see \cite{Constructing, Li, Eigen} and their references. One of the most well-known $T_{1}$ matrices is the arrowhead matrix, which features a dense column and a dense row. In this part, we take a arrowhead matrix $X \in \mathbb{R}^{2048\times 64}$. We define some vectors in the beginning: $e_{1ns}=(1,0,0, \cdots, 0,0)^{\top} \in \mathbb{R}^{64}$, $e_{1zs}=(0,1,1, \cdots, 1,1)^{\top} \in \mathbb{R}^{64}$, $e_{1nb}=(1,0,0, \cdots, 0,0)^{\top} \in \mathbb{R}^{2048}$ and $e_{1zs}=(0,1,1, \cdots, 1,1)^{\top} \in \mathbb{R}^{2048}$, together with a diagonal matrix $E={\rm diag}(1, \beta^{\frac{1}{63}}, \cdots, \beta^{\frac{62}{63}}, \beta) \in \mathbb{R}^{64\times 64}$. Moreover, a large matrix $\mathbb{O}_{1984\times 64}$ is formed with all the elements $0$. Therefore, a matrix $P_{sparse} \in \mathbb{R}^{2048\times 64}$ is formed as
\begin{equation}
P_{sparse}=
\begin{pmatrix}
E \\
\mathbb{O}_{1984\times 64} \nonumber
\end{pmatrix}.
\end{equation} 
We build $X \in \mathbb{R}^{2048\times 64}$ as
\begin{equation}
X=-5e_{1nb} \cdot e_{1zs}^{\top}-10e_{1zs} \cdot e_{1ns}^{\top}+P_{sparse}
\end{equation}

As a comparison group, we construct a common dense matrix $U$ using the same method described in \cite{New, Shifted, error}. $U$ is constructed using Singular Value Decomposition (SVD), and we control $\kappa_{2}(U)$ through $\sigma_{min}(U)$. We let $U \in \mathbb{R}^{2048\times 64}$
\begin{equation}
U=O \Sigma H^{\top}. \nonumber 
\end{equation}
Here, $O \in \mathbb{R}^{2048\times 2048}, H \in \mathbb{R}^{64\times 64}$ are random orthogonal matrices and
\begin{equation}
\Sigma = {\rm diag}(1, \sigma^{\frac{1}{63}}, \cdots, \sigma^{\frac{62}{63}}, \sigma) \in \mathbb{R}^{2048\times 64} \nonumber
\end{equation}
is a diagonal matrix. Here, $0<\sigma=\sigma_{min}(U)<1$ is a constant. Therefore, we have $\sigma_{1}(U)=\norm{U}_{2}=1$ and $\kappa_{2}(U)=\frac{1}{\sigma}$. 

For such an $X$, it satisfies $11(m\uu+(n+1)\uu) \cdot kc^{2} \le 11(mn\uu+n(n+1)\uu) \cdot \norm{X}_{g}^{2}$ as shown in Theorem~\ref{THM 4.1} with $c=10$ and $k=2174$. Therefore, for the alternative choice of $s$, we have $s=j_{s}=11(m\uu+(n+1)\uu) \cdot kc^{2}$. We vary $\beta$ from $10^{-4}$, $10^{-6}$, $10^{-8}$, $10^{-10}$ to $10^{-12}$ to adjust $\kappa_{2}(X)$. We compare the numerical results with our alternative choice of $s$ and the original $s$ in \cite{New}. Meanwhile, for the dense matrix $U$ in the comparison group, $\sigma$ of $U$ varies to ensure $\kappa_{2}(U) \approx \kappa_{2}(X)$. For $U$, we use the original $s=11(mn\uu+n(n+1)\uu)\norm{X}_{g}^{2}$ in \cite{New}. We test the applicability and accuracy of SCholeskyQR3 with different $s$ for $X$ and $U$. We show the comparison of the applicability and accuracy in Table~\ref{tab:8}–Table~\ref{tab:10}. The accuracy is divided into two parts, the orthogonality $\norm{Q^{\top}Q-I}_{F}$ and residual $\norm{QR-X}_{F}$.

\begin{table}
\caption{Example I: SCholeskyQR3 for $X$ with the alternative choice of $s$}
\centering
\begin{tabular}{|c c c c c c|}
\hline
$\kappa_{2}(X)$ & $4.28e+06$ & $4.11e+08$ & $3.94e+10$ & $3.75e+12$ & $3.56e+14$ \\
\hline
Orthogonality & $2.29e-15$ & $3.33e-15$ & $1.53e-14$ & $1.92e-14$ & $3.14e-14$ \\
\hline
Residual & $8.32e-14$ & $8.24e-14$ & $8.28e-14$ & $8.15e-14$ & $8.21e-14$ \\
\hline
\end{tabular}
\label{tab:8}
\end{table}

\begin{table}
\caption{Example I: SCholeskyQR3 for $X$ with the original $s$}
\centering
\begin{tabular}{|c c c c c c|}
\hline
$\kappa_{2}(X)$ & $4.28e+06$ & $4.11e+08$ & $3.94e+10$ & $3.75e+12$ & $3.56e+14$ \\
\hline
Orthogonality & $2.68e-15$ & $1.33e-14$ & $2.15e-14$ & $3.33e-14$ & $-$ \\
\hline
Residual & $1.62e-13$ & $1.62e-13$ & $1.62e-13$ & $1.62e-13$ & $-$ \\
\hline
\end{tabular}
\label{tab:9}
\end{table}

\begin{table}
\caption{Example I: SCholeskyQR3 for $U$ with the original $s$}
\centering
\begin{tabular}{|c c c c c c|}
\hline
$\kappa_{2}(X)$ & $4.28e+06$ & $4.11e+08$ & $3.94e+10$ & $3.75e+12$ & $3.56e+14$ \\
\hline
Orthogonality & $1.71e-15$ & $2.06e-15$ & $2.03e-15$ & $2.12e-15$ & $-$ \\
\hline
Residual & $7.08e-16$ & $6.46e-16$ & $6.01e-16$ & $5.70e-16$ & $-$ \\
\hline
\end{tabular}
\label{tab:10}
\end{table}

According to Table~\ref{tab:8} and Table~\ref{tab:9}, we find that SCholeskyQR3 with our alternative choice of $s$ can handle more ill-conditioned sparse matrices than with the original $s$ in \cite{New}, demonstrating the improvement of our new $s$ for $T_{1}$ matrices in terms of applicability when $11(m\uu+(n+1)\uu) \cdot kc^{2} \le 11(mn\uu+n(n+1)\uu) \cdot \norm{X}_{g}^{2}$. When $\kappa_{2}(X) \ge 10^{14}$, our alternative choice of $s$ remains applicable, while the original $s$ does not. The comparison between Table~\ref{tab:8} and Table~\ref{tab:10} highlights the effectiveness of designing an alternative choice of $s$ for the sparse cases, which corresponds to the comparison in Table~\ref{tab:Comparisons1}. Theoretical results for $s=11(mn\uu+n(n+1)) \cdot \norm{X}_{g}^{2}$ in \cite{New} hold for all types of $X$. CholeskyQR-type algorithms perform differently between the common dense and the sparse cases. From the perspective of accuracy, for $X$ with the dense columns, SCholeskyQR3 with our alternative choice of $s$ performs better than the case with the original $s$ in both orthogonality and residual, as indicated by the comparison of orthogonality and residuals in Table~\ref{tab:8}–Table~\ref{tab:10}. This aligns with the theoretical results presented in Table~\ref{tab:Comparisons2}.

\subsubsection{Example II: the matrix with all sparse columns}
Sparse matrices with all the columns sparse are also very common in real applications, such as scientific computing, machine learning, and image processing \cite{Systems, Introduction}. In this group of numerical experiments, we take a $X \in \mathbb{R}^{2048\times 64}$. We define a vector $u_{t} \in \mathbb{R}^{64}$ as $u_{t}=(10,1,\cdots, 1, 1)^{\top}$. We define a diagonal matrix $E_{s}={\rm diag}(10, 10 \cdot b^{\frac{1}{63}}, \cdots, 10 \cdot b^{\frac{62}{63}}, 10 \cdot b) \in \mathbb{R}^{2048\times 64}$. Moreover, we build two matrices with all the elements $0$, $\mathbb{O}_{1984\times 64}$ and $\mathbb{O}_{1023\times 64}$. Therefore, a matrix $P_{sparse} \in \mathbb{R}^{2048\times 64}$ is formed as
\begin{equation}
D_{sparse}=
\begin{pmatrix}
E_{s} \\
\mathbb{O}_{1984\times 64} \nonumber
\end{pmatrix}.
\end{equation} 
Moreover, we construct $C_{sparse} \in \mathbb{R}^{2048\times 64}$ as
\begin{equation}
C_{sparse}=
\begin{pmatrix}
\mathbb{O}_{1023\times 64} \\
u_{t} \\
u_{t} \\
\mathbb{O}_{1023\times 64} \nonumber
\end{pmatrix}.
\end{equation} 
For another $K \in \mathbb{R}^{m\times n}$, each element $K_{ij}$ is defined as
$K_{ij}=
\begin{cases} 
10, & \text{if } j=1, i=2,3,\cdots,10 \\
0, & others
\end{cases}.$
We build $X \in \mathbb{R}^{2048\times 64}$ as
\begin{equation}
X=C_{sparse}+D_{sparse}+J. \nonumber
\end{equation}
For such an $X \in \mathbb{R}^{m\times n}$, we also take a dense $U \in \mathbb{R}^{2048\times 64}$ constructed by SVD as a comparison group.

For this $X$, $11(m\uu+(n+1)\uu) \cdot kc^{2} \le 11(mn\uu+n(n+1)\uu) \cdot \norm{X}_{g}^{2})$ still holds with $c=10$ and $k=201$. Therefore, we have $s=j_{s}=11(m\uu+(n+1)\uu) \cdot kc^{2}$ in \eqref{eq:js1} as shown in Theorem~\ref{THM 4.1}. We vary $b$ from $10^{-5}$, $10^{-7}$, $10^{-9}$, $10^{-11}$ to $10^{-13}$ to adjust $\kappa_{2}(X)$. We set the same comparison group as the previous experiments and take $s=11(mn\uu+n(n+1)\uu)\norm{X}_{g}^{2}$. We test the applicability of SCholeskyQR3 with different $s$ for both $X$ and $U$. The corresponding results of this group of numerical experiments are listed in Table~\ref{tab:8a}-Table~\ref{tab:10a}.

\begin{table}
\caption{Example II: SCholeskyQR3 for $X$ with the alternative choice of $s$}
\centering
\begin{tabular}{|c c c c c c|}
\hline
$\kappa_{2}(X)$ & $3.21e+06$ & $3.07e+08$ & $2.92e+10$ & $2.77e+12$ & $2.62e+14$ \\
\hline
Orthogonality & $1.89e-15$ & $2.16e-15$ & $1.91e-15$ & $1.86e-15$ & $1.77e-15$ \\
\hline
Residual & $4.18e-15$ & $3.28e-15$ & $3.51e-15$ & $6.14e-15$ & $4.55e-15$ \\
\hline
\end{tabular}
\label{tab:8a}
\end{table}

\begin{table}
\caption{Example II: SCholeskyQR3 for $X$ with the original $s$}
\centering
\begin{tabular}{|c c c c c c|}
\hline
$\kappa_{2}(X)$ & $3.21e+06$ & $3.07e+08$ & $2.92e+10$ & $2.77e+12$ & $2.62e+14$ \\
\hline
Orthogonality & $2.21e-15$ & $2.31e-15$ & $1.83e-15$ & $2.15e-15$ & $-$ \\
\hline
Residual & $7.93e-15$ & $7.29e-15$ & $7.24e-15$ & $8.13e-15$ & $-$ \\
\hline
\end{tabular}
\label{tab:9a}
\end{table}

\begin{table}
\caption{Example II: SCholeskyQR3 for $U$ with the original $s$}
\centering
\begin{tabular}{|c c c c c c|}
\hline
$\kappa_{2}(X)$ & $3.21e+06$ & $3.07e+08$ & $2.92e+10$ & $2.77e+12$ & $2.62e+14$ \\
\hline
Orthogonality & $1.76e-15$ & $2.15e-15$ & $2.10e-15$ & $2.09e-15$ & $-$ \\
\hline
Residual & $7.09e-16$ & $6.71e-16$ & $5.89e-16$ & $5.85e-16$ & $-$ \\
\hline
\end{tabular}
\label{tab:10a}
\end{table}

According to Table~\ref{tab:8a}-Table~\ref{tab:10a}, we observe that similar results hold for the sparse matrices without dense columns. Based on all the experiments regarding two examples, we can say that with our alternative choice of $s$, SCholeskyQR3 performs better in both applicability and accuracy compared to the case with the original $s$ in \cite{New}. We can provide different analysis and improvements for the sparse cases regarding CholeskyQR-type algorithms.

\subsection{CPU time(s) of the algorithm}
In this part, we focus on CPU time(s) of SCholeskyQR3 with our alternative choice of $s$ for sparse matrices. For the sparse $X \in \mathbb{R}^{m\times n}$, we focus on the influence of $m$, $n$ and the nnze and make comparison between SCholeskyQR3 with different $s$. 

To test the influence of the nnze on CPU time(s) of the algorithm, for the input $X \in \mathbb{R}^{m\times n}$, we fix $m=2000$ and $n=50$. We vary $h=\frac{nnze}{mn}$ from $0.02$, $0.05$, $0.10$, $0.15$ to $0.20$ and do numerical experiments. To test the influence of $m$, we fix $n=50$ and $h=0.05$. To test the influence of $n$, we fix $m=2000$ and $h=0.05$. The sparse $X \in \mathbb{R}^{m\times n}$ is built using $Sprand$ in MATLAB. We keep $\mbox{rank}(X)=n$ and $\kappa_{2}(X)=10^{6}$. We compare CPU time(s) of SCholeskyQR3 with our alternative choice of $s$ and the original $s$ in \cite{New}. Numerical results are shown below in Table~\ref{tab:13}-Table~\ref{tab:15}.

\begin{table}
\caption{Comparison of CPU time(s) with different $nnze$}
\centering
\begin{tabular}{|c c c c c c|}
\hline
$h$ & $0.02$ & $0.05$ & $0.10$ & $0.15$ & $0.20$ \\
\hline
Original & $0.002$ & $0.014$ & $0.032$ & $0.035$ & $0.036$ \\
\hline
Alternative & $0.002$ & $0.014$ & $0.031$ & $0.034$ & $0.035$ \\
\hline
\end{tabular}
\label{tab:13}
\end{table}

\begin{table}
\caption{Comparison of CPU time(s) with different $m$}
\centering
\begin{tabular}{|c c c c c c|}
\hline
$m$ & $200$ & $500$ & $1000$ & $2000$ & $5000$ \\
\hline
Original & $0.001$ & $0.002$ & $0.006$ & $0.014$ & $0.028$ \\
\hline
Alternative & $0.001$ & $0.002$ & $0.006$ & $0.014$ & $0.028$ \\
\hline
\end{tabular}
\label{tab:14}
\end{table}

\begin{table}
\caption{Comparison of CPU time(s) with different $n$}
\centering
\begin{tabular}{|c c c c c c|}
\hline
$n$ & $20$ & $50$ & $100$ & $200$ & $500$ \\
\hline
Original & $0.001$ & $0.014$ & $0.050$ & $0.222$ & $1.604$ \\
\hline
Alternative & $0.001$ & $0.014$ & $0.050$ & $0.229$ & $1.642$ \\
\hline
\end{tabular}
\label{tab:15}
\end{table}

According to Table~\ref{tab:13}-Table~\ref{tab:15}, we find that for the sparse $X \in \mathbb{R}^{m\times n}$, our alternative choice of $s$ for SCholeskyQR3 does not influence the efficiency of the algorithm compared to the case with the original $s$. Although our new choice of $s$ does not accelerate the algorithm for the sparse cases, SCholeskyQR3 still has better applicability with our alternative choice of $s$ than with the original one. Generally speaking, we can say that our alternative choice of $s$ improves the properties of SCholeskyQR3 and is a better option than the original $s$ shown in \cite{New} based on all the numerical experiments in this work.

\section{Conclusions}
\label{sec:conclusions}
This study focuses on SCholeskyQR for sparse matrices. We propose a new shifted item $s$ for SCholeskyQR3 based on the nnze and the element with the largest absolute value of the input $X$. Therefore, an optimal $s$ can be taken for SCholeskyQR3 for the sparse cases after the comparison between our new $s$ and the $s$ in \cite{New}. We prove that such an alternative choice of $s$ can guarantee the accuracy of SCholeskyQR3 and can improve the applicability of the algorithm for the sparse $X$ after detailed numerical analysis. Numerical experiments confirm the advantage of our alternative choice of $s$ for SCholeskyQR3 in both applicability and accuracy over the original $s$ in sparse cases. Moreover, SCholeskyQR3 remains as efficient with our alternative choice of $s$ as with the original $s$ in \cite{New}. 

\section*{Acknowledgments}
We acknowledge the help of Professor Zhonghua Qiao from the Hong Kong Polytechnic University with regard to this topic. We are also grateful for the idea provided by Professor Tiexiang Li from Southeast University, and for the discussions with her about sparse matrices. Additionally, we thank Professor Valeria Simoncini from University of Bologna and Professor Michael Kwok-Po Ng from Hong Kong Baptist University for their valuable suggestions. We also extend our gratitude to Dr. Yuji Nakatsukasa from Oxford University for the tips on CholeskyQR. 

\section*{Conflict of interest}
The authors declare that they have no conflict of interest.
 
\section*{Data availability}
The authors declare that all data supporting the findings of this study are available within this article.

\bibliography{references}

\end{document}